\documentclass[11pt, a4paper, reqno]{amsart}

\usepackage[T1]{fontenc}
\usepackage[utf8]{inputenc}

\usepackage{newtxtext}
\usepackage{newtxmath}

\usepackage[scaled=0.90]{helvet}

\usepackage{courier}

\usepackage[mathcal]{eucal}

\usepackage{mathrsfs}

\usepackage{xcolor}

\usepackage{microtype}                   
\usepackage[
    a4paper,
    top=2.5cm,
    bottom=2.5cm,
    left=2.5cm,
    right=2.5cm,
    headheight=14pt,
    headsep=1.2em,
    footskip=2.5em,
]{geometry}

\usepackage{fancyhdr}
\fancypagestyle{plain}{%
    \fancyhf{}
    
    \fancyfoot[C]{\small\thepage}
}

\usepackage{amsmath, amsthm}
\usepackage{mathtools}
\usepackage{bm}
\usepackage{nicefrac}

\usepackage{graphicx}
\usepackage{float}
\usepackage{booktabs}
\usepackage{caption}
\usepackage{tikz}
\usepackage{tikz-cd}       
\usetikzlibrary{arrows.meta, calc, positioning, decorations.pathmorphing,
                 decorations.pathreplacing,
                 shapes.geometric, patterns, shadows}

\usepackage[
    colorlinks  = true,
    linkcolor   = cyan!80!black,
    citecolor   = purple,
    urlcolor    = blue!60!black,
    backref     = page,
]{hyperref}
\usepackage[nameinlink, capitalize, noabbrev]{cleveref}

\usepackage{etoolbox}
\makeatletter
\begingroup
\catcode`\#=12
\gdef\thm@anchor@patch{%
  \patchcmd{\cref@thmoptarg}
    {\refstepcounter[#1]{#3}}
    {\refstepcounter[#1]{#3}\ifstrequal{#1}{#3}{}{\MakeLinkTarget[#1]{#3}}}
    {}{\PackageWarning{local}{theorem anchor patch failed}}}
\endgroup
\AtBeginDocument{\thm@anchor@patch}
\makeatother

\theoremstyle{plain}
\newtheorem{theorem}{Theorem}
\newtheorem{lemma}[theorem]{Lemma}
\newtheorem{proposition}[theorem]{Proposition}
\newtheorem{corollary}[theorem]{Corollary}

\theoremstyle{definition}

\theoremstyle{remark}

\newtheorem*{theorem*}{Theorem}
\newtheorem*{lemma*}{Lemma}
\newtheorem*{proposition*}{Proposition}
\newtheorem*{corollary*}{Corollary}
\newtheorem*{conjecture*}{Conjecture}
\newtheorem*{remark*}{Remark}

\AtBeginDocument{}

\makeatletter
\renewcommand{\@cite}[2]{{\small[{#1\if@tempswa , #2\fi}]}}
\makeatother

\definecolor{backrefcolor}{HTML}{8B4513}
\definecolor{citpagecolor}{HTML}{C00000}

\makeatletter
\renewcommand*{\backref}[1]{}

\renewcommand*{\backrefalt}[4]{%
    \ifcase #1 %
    \or {\hspace{0.8em}\footnotesize #2}%
    \else {\hspace{0.8em}\footnotesize #2}%
    \fi
}

\makeatother

\definecolor{emailcolor}{HTML}{E6007E}
\makeatletter
\let\orig@email\email
\renewcommand{\email}[1]{\orig@email{\color{emailcolor}#1}}
\makeatother

\makeatletter
\renewcommand{\section}{\@startsection{section}{1}%
    {\z@}{-2.5ex \@plus -1ex \@minus -.2ex}{1.5ex \@plus .2ex}%
    {\normalfont\fontsize{16}{19}\bfseries}}
\renewcommand{\subsection}{\@startsection{subsection}{2}%
    {\z@}{-2ex \@plus -0.8ex \@minus -.2ex}{1ex \@plus .2ex}%
    {\normalfont\fontsize{13}{16}\bfseries}}
\renewcommand{\subsubsection}{\@startsection{subsubsection}{3}%
    {\z@}{-1.5ex \@plus -0.5ex \@minus -.1ex}{0.8ex \@plus .1ex}%
    {\normalfont\fontsize{11}{14}\bfseries\itshape}}

\renewcommand{\@settitle}{\begin{center}%
    \baselineskip20pt\relax
    \bfseries\fontsize{18}{22}\selectfont
    \@title
    \end{center}%
}
\let\uppercasenonmath\@gobble
\AtBeginDocument{}

\renewcommand{\@setauthors}{%
    \begingroup
    \trivlist
    \centering
    \fontsize{13}{16}\selectfont
    \@topsep30\p@\relax
    \advance\@topsep by -\baselineskip
    \item\relax
    \andify\authors
    \def\\{\protect\linebreak}%
    \authors
    \endtrivlist
    \endgroup
}
\makeatother

\definecolor{toccolor}{RGB}{27,58,92}

\makeatletter
\renewcommand{\tableofcontents}{%
    \par
    \begin{list}{}{%
        \leftmargin2.5pc \rightmargin2.5pc
        \listparindent\z@ \itemindent\z@
        \parsep\z@ \topsep6pt \partopsep\z@}
    \item\relax
    {\large\bfseries Contents}\par
    \vspace{4pt}%
    \@input{\jobname.toc}%
    \if@filesw
        \expandafter\newwrite\csname tf@toc\endcsname
        \immediate\openout \csname tf@toc\endcsname \jobname.toc\relax
    \fi
    \global\@nobreakfalse
    \end{list}%
}
\makeatother

\makeatletter
\renewcommand{\tocsection}[3]{%
    \indentlabel{\@ifnotempty{#2}{\color{toccolor}\ignorespaces#1 #2.\quad}}{\bfseries\color{toccolor}#3}}
\renewcommand{\tocsubsection}[3]{%
    \indentlabel{\@ifnotempty{#2}{\color{toccolor}\ignorespaces#1 #2.\quad}}{\color{toccolor}#3}}
\renewcommand{\tocappendix}[3]{%
    \indentlabel{\@ifnotempty{#2}{\color{toccolor}\ignorespaces#1 #2.\quad}}{\bfseries\color{toccolor}#3}}

\def\l@section{\@tocline{1}{2pt}{0pt}{}{}}
\def\l@subsection{\@tocline{2}{0pt}{1.5em}{}{}}

\def\@tocline#1#2#3#4#5#6#7{\relax
    \ifnum #1>\c@tocdepth
    \else
        \par \addpenalty\@secpenalty\addvspace{#2}%
        \begingroup \hyphenpenalty\@M \small
        \@ifempty{#4}{%
            \@tempdima\csname r@tocindent\number#1\endcsname\relax
        }{%
            \@tempdima#4\relax
        }%
        \parindent\z@ \leftskip#3\relax \advance\leftskip\@tempdima\relax
        \rightskip\@pnumwidth plus4em \parfillskip-\@pnumwidth
        #5\leavevmode\hskip-\@tempdima
        {\color{toccolor}#6}\nobreak\relax
        \leaders\hbox{$\color{toccolor}\m@th
            \mkern 4.5mu\hbox{\normalfont\small\color{toccolor}.}%
            \mkern 4.5mu$}\hfill
        \hbox to\@pnumwidth{\@tocpagenum{\color{toccolor}#7}}\par
        \nobreak
        \endgroup
    \fi}
\makeatother

\usepackage{enumitem}
\setlist{leftmargin=*, itemsep=3pt, parsep=1pt}
\setlist[enumerate,1]{label=(\roman*)}
\setlist[enumerate,2]{label=(\alph*)}

\definecolor{abstractbg}{gray}{0.93}

\makeatletter
\renewenvironment{abstract}{%
    \ifx\maketitle\relax
        \ClassWarning{\@classname}{Abstract should precede
            \protect\maketitle}%
    \fi
    \global\setbox\abstractbox=\vtop\bgroup
        \normalfont\small
        \list{}{\labelwidth\z@
            \leftmargin5pc \rightmargin\leftmargin
            \listparindent\normalparindent \itemindent\z@
            \parsep\z@ \@plus\p@
            
        }%
        \item\relax
        \begin{center}\textbf{Abstract}\end{center}%
        \vspace{-2pt}%
        \noindent\ignorespaces
}{%
    \endlist\egroup
    \ifx\@setabstract\relax \@setabstracta \fi
}

\let\orig@setabstracta\@setabstracta
\renewcommand{\@setabstracta}{%
    \orig@setabstracta
}
\makeatother

\DeclareMathOperator*{\Res}{Res}     
\allowdisplaybreaks[1]               

\title[Asymptotic counting with restricted prime factors]{Asymptotic counting of integers\\
with prime factors $p_{r^a s^b}$}

\author{Mehdi Golafshan}
\address{Department of Mathematics, University of Li\`ege,
         All\'ee de la D\'ecouverte 12 (B37), 4000 Li\`ege, Belgium}
\email{mgolafshan@uliege.be}
\thanks{Supported by the FNRS Research grant T.196.23 (PDR)}

\date{}

\begin{document}

\begin{abstract}
Let $p_n$ be the $n$th prime, and let $r,s\ge2$ be fixed
multiplicatively independent integers. We count the integers up to $x$
whose prime factors all have the form $p_{r^a s^b}$ with integers
$a,b\ge0$. Our asymptotic formula for this count has relative error
$o(1)$ and is explicit down to the multiplicative constant. For
$(r,s)=(2,3)$ these integers are the prime codes of the ordinals below
$\omega^{\omega^2}$, so the formula settles that case of the counting
problem of Vernaeve, Vindas and Weiermann.
\end{abstract}

\maketitle

\vspace{-4pt}
\begin{list}{}{%
    \leftmargin3.5pc \rightmargin3.5pc
    \listparindent0pt \itemindent0pt
    \parsep4pt \topsep0pt \partopsep0pt}
\item\relax
{\small\textbf{Keywords:}\ \ integers with restricted prime factors;
prime indices; asymptotic counting function; Mellin transform;
saddle-point method; Tauberian theorem; ordinal counting.}

\item\relax
{\small\textbf{2020 MSC:}\ \ 11N25, 11M41, 11P82, 40E05, 03E10}
\end{list}
\vspace{4pt}


\section{Introduction}
\label{sec:intro}

Let $p_n$ denote the $n$th prime and $\mathbb N=\{0,1,2,\ldots\}$.
Fix multiplicatively independent integers $r,s\ge2$ and put
$\mathcal P_{r,s}=\{p_{r^a s^b}:a,b\in\mathbb N\}$. We call $r$ and $s$
the generators of the index set. Let $S_{r,s}$ be the set of finite
products of primes in $\mathcal P_{r,s}$, including the empty product $1$. For $x\ge1$, we study
\[
C_{r,s}(x)=\#\{n\in S_{r,s}:n\le x\}.
\]
We determine an asymptotic equivalent for $C_{r,s}(x)$, including its
multiplicative constant. In what follows, $r$ and $s$ are fixed, and we
write $\mathcal P$, $S$ and $C(x)$.

Vernaeve et al.~\cite{VVW2014} solved the one-generator problem, with
prime indices $m^j$, where $m\ge2$ is a fixed integer and $j\in\mathbb N$.
In 2017, Neyt~\cite{Neyt2017} announced strong asymptotics for
integers whose prime factors are indexed by a multiplicatively generated
set, using distributional expansions and a finite iterative
approximation of the saddle point.
To our knowledge, no complete proof of the two-generator case has since
appeared in a published article.

We prove the two-generator asymptotic in full.
We expand the prime-weight Dirichlet series in terms of a Barnes-type
lattice zeta function~\cite{Ruijsenaars2000}. This gives its
meromorphic continuation, with Laurent expansions at its poles and
bounds on vertical strips.
Mellin inversion then expands a holomorphic logarithm of the generating
function, uniformly in a sector;
\cref{prop:mellin-expansion} states the remainder.

Ingham's theorem needs two estimates, a complex asymptotic in one
sector and a growth bound in every fixed sector. We verify both, then
expand the saddle point.
The multiplicative constant is explicit. It combines Laurent
coefficients of the lattice zeta function with an absolutely convergent
series. The series may be truncated to the terms with
$a\log r+b\log s\le T$. Section~\ref{sec:main-proof} shows that the
relative error in the constant is then $O_{r,s}((\log T)^3/T)$.

The prime weights $\log p_{r^a s^b}$ are perturbations of the lattice
points $a\log r+b\log s$.
The perturbation gives the Mellin kernel a double pole at $1$ and a pole
of order four at $0$, in addition to the simple pole at $2$. The logarithmic
terms this produces, and the shift of the saddle point, must both
be kept to reach relative error $o(1)$.

For $(r,s)=(2,3)$ the same count gives the prime codes of ordinals
below $\omega^{\omega^2}$, studied in~\cite{VVW2014}.
Section~\ref{sec:trees-ordinals} describes this correspondence and a
related one with rooted trees.

Implied constants may depend on $r$ and $s$.
We write $f\ll g$ for $f=O(g)$ when $g\ge0$, and $f\asymp g$ for
$f\ll g\ll f$ when both functions are nonnegative. Subscripts on $\ll$
and on $O$ indicate further parameters on which the implied constant
depends.

All logarithms are natural. Let $\zeta$ denote the Riemann zeta function,
$\gamma$ Euler's constant, and $\gamma_1,\gamma_2$ the first two Stieltjes
constants.
The Stieltjes constants are normalised so that, as $z\to0$,
\[
\zeta(1+z)=\frac1z+\gamma-\gamma_1z+\frac{\gamma_2}{2}z^2+O(z^3).
\]

Define
\[
\begin{aligned}
A_1&=\frac{\zeta(3)}{\log r\log s},\qquad
A_2=\frac{\zeta(2)}{\log r\log s},\qquad
A_3=A_2\left(\frac{\log(rs)}2+\gamma-1\right)-\frac{\zeta'(2)}{\log r\log s},\\
\nu&=\frac{(\log r)^2+(\log s)^2+\pi^2+12-12\gamma_1-6\gamma^2}
{12\log r\log s}-\frac34+\frac{A_2(A_3-A_2)}{6A_1}.
\end{aligned}
\]

\begin{theorem}[Main result]
\label{thm:prime-index-strong-asymptotic}
Let $r,s\ge2$ be fixed multiplicatively independent integers and let
$W=(\log x/(2A_1))^{1/3}$. As $x\to\infty$,
\begin{equation}
\label{eq:main-theorem-strong-asymptotic}
\begin{aligned}
C(x)\sim\mathfrak C\,W^\nu\exp\bigg(&3A_1W^2-A_2W\log W+A_3W
+\frac{(\log W)^3}{6\log r\log s}\\
&-\left(\frac{\log(rs)}{4\log r\log s}
+\frac{A_2^2}{12A_1}\right)(\log W)^2\bigg),
\end{aligned}
\end{equation}
where $\mathfrak C>0$ is given explicitly in Section~\ref{sec:main-proof}.
\end{theorem}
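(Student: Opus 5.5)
We work with the generating function $F(t)=\sum_{n\in S}n^{-t}=\prod_{a,b\ge0}(1-p_{r^as^b}^{-t})^{-1}$ for real $t\to0^+$ and its complex extension $t=\sigma+i\tau$. Equivalently, in the additive variable $u=\log x$, we use the Laplace transform $\int_0^\infty e^{-tu}\,dC(e^u)$. The plan has four steps, in order.

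\textbf{Step 1: Mellin expansion of $\log F$.} We write
\[
\log F(t)=\sum_{k\ge1}\frac1k\sum_{a,b}p_{r^as^b}^{-kt}
\]
and express it as a Mellin inverse integral of $\Gamma(z)\zeta(z+1)D(z)t^{-z}$, where
\[
D(z)=\sum_{a,b}(\log p_{r^as^b})^{-z}.
\]
By the prime number theorem with error term,
\[
\log p_n=\log n+\log\log n+\frac{\log\log n-1}{\log n}+\cdots .
\]
With $\ell=a\log r+b\log s$, this gives $(\log p_{r^as^b})^{-z}$ as $\ell^{-z}$ times an expansion in $\log\ell/\ell$ and $1/\ell$. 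We sum over the lattice using the Barnes-type zeta $\sum\ell^{-z}$. That function has a simple pole at $z=2$ with residue $1/(\log r\log s)$, a pole at $z=1$, and a value at $z=0$. The $\log\ell$ corrections turn these into a double pole at $1$ and raise the order at $0$. Combined with $\Gamma(z)\zeta(z+1)$, they give the kernel singularities announced in the introduction.

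Shifting the contour past $z=0$ gives, uniformly in a sector $|\arg t|\le\theta<\pi/2$,
\[
\log F(t)=\frac{A_1}{t^2}+\frac{A_2\log(1/t)+c_1}{t}+P_3\bigl(\log(1/t)\bigr)+o(1),
\]
where $P_3$ is a cubic polynomial. This is \cref{prop:mellin-expansion}; we take its remainder as given. Its coefficients produce $A_2$, $A_3$, the $(\log W)^3/(6\log r\log s)$ term and $\nu$.

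\textbf{Step 2: Ingham-type Tauberian theorem.} Apply the Tauberian theorem with two inputs. The first is the sector asymptotic from Step 1. The second is a growth bound: $|F(\sigma+i\tau)|\le F(\sigma)\exp(-c\,\sigma^{-2})$ whenever $|\tau|\ge\theta\sigma$. For this bound we use
\[
\log F(\sigma)-\mathrm{Re}\log F(\sigma+i\tau)=\sum_{k\ge1}\frac1k\sum_{a,b}p^{-k\sigma}\bigl(1-\cos(k\tau\log p)\bigr).
\]
We keep only $k=1$ and the primes with $\log p\asymp1/\sigma$. The number of lattice points in a window of width $1$ near $\ell$ is $\asymp\ell$. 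Since $\log p_{r^as^b}$ is a smooth increasing function of $\ell$, the phases $\tau\log p$ cannot all lie near $2\pi\mathbb Z$. This gives a positive proportion of terms with $1-\cos\ge c$, and hence a loss $\gg\sigma^{-2}$. The theorem then yields
\[
C(x)\sim\frac{F(t_x)\,x^{t_x}}{t_x\sqrt{2\pi\,\partial_t^2\log F(t_x)}},
\]
where $t_x$ solves the saddle-point equation $-\partial_t\log F(t)=\log x$.

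\textbf{Step 3: saddle point expansion.} To leading order the saddle point is $t_x\approx(2A_1/\log x)^{1/3}=1/W$. We write $t_x=W^{-1}(1+\delta)$ and solve for $\delta$ iteratively. The first correction is $\delta\approx A_2\log W/(6A_1W)+c/W$, and we continue to order $o(W^{-2})$ in $\log F+t\log x$. We then substitute back and expand; the $\tfrac{A_2^2}{12A_1}(\log W)^2$ and $\tfrac{A_2(A_3-A_2)}{6A_1}$ contributions come from the square of this shift. Finally we collect the Gaussian factor $\sqrt{2\pi\cdot 6A_1W^4}$ and $t_x^{-1}$ into the power $W^\nu$ and the constant $\mathfrak C$.

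\textbf{Main obstacle.} The main difficulty is the bookkeeping in Steps 1 and 3. Every term of $\log F$ down to $o(1)$ must be kept, including the cubic-in-$\log$ terms at the order-four pole at $0$. The saddle shift must also be carried far enough that its cross terms with $A_2t^{-1}\log(1/t)$ are captured, since any dropped term changes the constant. The minor-arc bound of Step 2 is the main analytic risk. If the equidistribution argument is delicate, we would first handle $|\tau|\le\sigma^{1/2}$ by Taylor expansion, and treat larger $|\tau|$ using the density of the $\ell$ together with smoothness of $\log p_n$ in $n$.
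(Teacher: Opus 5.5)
Your Steps~1 and~3 follow the paper's route: lattice-zeta continuation of $D$, contour shift in a sector, Ingham's theorem, then explicit saddle expansion. Step~2, however, has a genuine gap.

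\textbf{The growth bound is false.} The bound $|F(\sigma+iy)|\le F(\sigma)e^{-c\sigma^{-2}}$ cannot hold for \emph{all} $|y|\ge\theta\sigma$. The product $F(\tau)=\prod_{p\in\mathcal P}(1-p^{-\tau})^{-1}$ converges absolutely for $\Re\tau>0$. So for fixed $\sigma$ and $P$ large, the factors with $p>P$ have total modulus in $[1-\epsilon,1+\epsilon]$, uniformly in $y$. Dirichlet's simultaneous approximation theorem then gives real $y\ge1$ with $y\log p$ as close as you like to $2\pi\mathbb Z$ for every $p\in\mathcal P$ with $p\le P$. Hence $\sup_y|F(\sigma+iy)|=F(\sigma)$ for every $\sigma>0$. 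Your phase argument fails exactly there: for such huge $y$, the smooth approximation $\log p_n\approx\log n+\log\log n$ says nothing about $y\log p_n$ modulo $2\pi$. Your fallback in the last paragraph has the same defect.

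\textbf{It is also the wrong hypothesis.} Even on $\theta\sigma\le|y|\le\Delta\sigma$, a saving $e^{-c\sigma^{-2}}$ with some small $c$ is not what \cref{thm:ingham} requires. Its second hypothesis is $|f(\tau)|\ll_\Delta e^{\Phi(|\tau|)}$ in each cone $|\Im\tau|\le\Delta\Re\tau$. With $y=u\sigma$ one has $\Phi(|\tau|)\approx A_1\sigma^{-2}/(1+u^2)$. So you need a saving of about $A_1u^2\sigma^{-2}/(1+u^2)$, which is nearly all of $\log F(\sigma)$ when $\Delta$ is large. A positive-proportion argument cannot deliver that.

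\textbf{The fix.} Nothing outside cones is needed, because monotonicity of $N$ does that work inside Ingham's theorem. Inside every cone your Step~1 expansion already holds, since it is valid for $|\arg\tau|\le\pi/2-\varepsilon$ with any $\varepsilon>0$. The cone bound therefore reduces to an inequality for $\Phi$. Write $\tau=\xi e^{i\vartheta}$ and $\ell=\log(1/\xi)$; then
\[
\Re\Phi(\xi e^{i\vartheta})-\Phi(\xi)
=-\frac{2A_1\sin^2\vartheta}{\xi^2}
+\frac{(A_3-A_2\ell)(\cos\vartheta-1)+A_2\vartheta\sin\vartheta}{\xi}
+\frac{c_1-c_2\ell}{2}\vartheta^2 .
\]
Every term is $O(\vartheta^2)$ times its prefactor, and the first dominates. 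So the difference is $\le0$ for small $\xi$, uniformly in $|\vartheta|\le\arctan\Delta$. This is how the paper proves \cref{lem:tauberian-admissibility}. For the first hypothesis it also takes $U$ of half-angle below $\pi/4$. Then $\Re\Phi\gg|\tau|^{-2}$, so the $-1$ in $f=\mathcal F-1$ is negligible.

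\textbf{A sign slip.} The coefficient of $\tau^{-1}\log(1/\tau)$ is $-A_2$, not $+A_2$. The double pole of $D$ at $1$ comes from $zZ'(z+1)$ and has coefficient $-c_2$, reflecting $\lambda_{a,b}>u_{a,b}$. Accordingly, the first saddle correction is $\delta\approx(A_3-A_2-A_2\log W)/(6A_1W)$. With your signs, Step~3 would give $+A_2W\log W$ and a different $\nu$.
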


In particular,
\[
\log C(x)\sim
3\left(\frac{\zeta(3)}{4\log r\log s}\right)^{1/3}(\log x)^{2/3}.
\]

The term $3A_1W^2$ is the main term of $\log C(x)$.
An asymptotic equivalent for $C(x)$ requires an additive error $o(1)$
in $\log C(x)$, so all terms down to the constant $\log\mathfrak C$ must
be kept.

Coprimality is not required. For example, $(r,s)=(4,6)$ is allowed.
Multiplicative independence, by contrast, is needed. It ensures that
distinct pairs $(a,b)$ give distinct indices $r^a s^b$. For
$(r,s)=(2,4)$, the identity $2^a4^b=2^{a+2b}$ gives repeated indices, so
this choice is not allowed.

Theorem~2 of Vernaeve et al.~\cite{VVW2014} does not
apply directly to the present weights. Its hypothesis concerns the counting
function of the weights, integrated with respect to $\mathrm dt/t$. It
allows this integral only one positive power, a polynomial of degree at
most two in the logarithm, and an $o(1)$ remainder. The
generating series of Section~\ref{sec:mellin} is
$\mathcal F(\tau)=\sum_{n\in S}n^{-\tau}$. By
\cref{prop:mellin-expansion}, $\log\mathcal F(\tau)$ contains
an extra term $-A_2\tau^{-1}\log(1/\tau)$ and a cubic polynomial in
$\log(1/\tau)$ as $\tau\to0^+$.

Taking logarithms turns our counting problem into a partition problem with
positive real parts $\log p_{r^a s^b}$.
For partitions with integer parts, Granovsky and Stark~\cite{GranovskyStark2012}
obtained asymptotics when the associated Dirichlet series has several
simple positive poles.
Debruyne and Tenenbaum~\cite{DebruyneTenenbaum2020} developed a saddle-point
method for suitable sets of integer parts.
Bridges et al.~\cite{BridgesBrindleBringmannFranke2024}
gave asymptotic expansions for partitions generated by a class of
infinite products.

In Theorem~1.4 of~\cite{BridgesBrindleBringmannFranke2024}, condition~(P2)
allows the Mellin kernel only simple poles away from zero, and at most a
double pole at zero. Our kernel has a simple pole at $2$, a double pole at $1$,
and a pole of order four at $0$. Thus that theorem does not apply directly.

Sections~\ref{sec:prime-weights} and~\ref{sec:mellin} establish the
prime-weight and Mellin expansions.
Section~\ref{sec:saddle} applies Ingham's theorem and expands the saddle
point. Section~\ref{sec:main-proof} proves
\cref{thm:prime-index-strong-asymptotic} and gives $\mathfrak C$
explicitly. Section~\ref{sec:concluding} returns to the case
$(r,s)=(2,3)$.

\begin{figure}[!t]
\centering
\tikzset{
  tree/.style={line cap=round,line join=round},
  v/.style={circle,draw=black,line width=0.5pt,fill=white,
            inner sep=0pt,minimum size=4pt},
  e/.style={line width=0.5pt},
  br/.style={decorate,decoration={brace,amplitude=3pt,mirror},
             line width=0.4pt},
  gd/.style={line width=0.3pt,draw=black!15},
  lb/.style={font=\footnotesize,inner sep=1pt},
  dp/.style={font=\scriptsize,text=black!60,inner sep=1pt},
}
\begin{tikzpicture}[tree]
  \useasboundingbox (-2.40,0.55) rectangle (2.40,-4.15);
  \foreach \y in {0,-1,-2} {\draw[gd] (-2.30,\y) -- (2.30,\y);}
  \node[v] (r) at (0,0) {};
  \node[lb,above=3pt] at (r) {$2^a3^b$};
  \foreach \x in {-1.95,-1.45,-0.55}
     {\node[v] (l) at (\x,-1) {}; \draw[e] (r) -- (l);}
  \node[lb] at (-1.00,-1) {$\cdots$};
  \foreach \x in {0.55,1.05,1.95}
     {\node[v] (m) at (\x,-1) {}; \node[v] (n) at (\x,-2) {};
      \draw[e] (r) -- (m); \draw[e] (m) -- (n);}
  \node[lb] at (1.50,-1.5) {$\cdots$};
  \draw[br] (-2.15,-1.28) -- (-0.35,-1.28)
     node[midway,below=4pt,lb] {$a$};
  \draw[br] (0.35,-2.28) -- (2.15,-2.28)
     node[midway,below=4pt,lb] {$b$};
  \node[lb] at (0,-3.95) {(a)};
\end{tikzpicture}%
\hspace{1.2cm}%
\begin{tikzpicture}[tree]
  \useasboundingbox (-2.75,0.55) rectangle (3.85,-4.15);
  \fill[black!7,rounded corners=3pt] (-2.50,-0.72) rectangle (-1.30,-3.22);
  \foreach \y in {0,-1,-2,-3} {\draw[gd] (-2.70,\y) -- (2.70,\y);}
  \node[v] (r) at (0,0) {};
  \node[lb,above=3pt] at (r) {$\prod_i p_{2^{a_i}3^{b_i}}$};
  \node[v] (s1) at (-1.90,-1) {}; \draw[e] (r) -- (s1);
  \foreach \x in {-2.25,-1.90}
     {\node[v] (t) at (\x,-2) {}; \draw[e] (s1) -- (t);}
  \node[v] (p) at (-1.55,-2) {}; \node[v] (q) at (-1.55,-3) {};
  \draw[e] (s1) -- (p); \draw[e] (p) -- (q);
  \node[lb] at (-1.90,-3.45) {$2^{a_1}3^{b_1}$};
  \node[v] (s2) at (0,-1) {}; \draw[e] (r) -- (s2);
  \node[v] (t) at (-0.35,-2) {}; \draw[e] (s2) -- (t);
  \foreach \x in {0,0.35}
     {\node[v] (p) at (\x,-2) {}; \node[v] (q) at (\x,-3) {};
      \draw[e] (s2) -- (p); \draw[e] (p) -- (q);}
  \node[lb] at (0,-3.45) {$2^{a_2}3^{b_2}$};
  \node[v] (s3) at (1.90,-1) {}; \draw[e] (r) -- (s3);
  \foreach \x in {1.375,1.725}
     {\node[v] (t) at (\x,-2) {}; \draw[e] (s3) -- (t);}
  \foreach \x in {2.075,2.425}
     {\node[v] (p) at (\x,-2) {}; \node[v] (q) at (\x,-3) {};
      \draw[e] (s3) -- (p); \draw[e] (p) -- (q);}
  \node[lb] at (1.90,-3.45) {$2^{a_3}3^{b_3}$};
  \node[lb] at (-0.95,-1) {$\cdots$};
  \node[lb] at (0.95,-1) {$\cdots$};
  \foreach \y/\t in {0/0,-1/1,-2/2,-3/3}
     {\node[dp,right=3pt] at (2.70,\y) {depth $\t$};}
  \node[lb] at (0,-3.95) {(b)};
\end{tikzpicture}
\caption{The case $(r,s)=(2,3)$.
(a) The tree with Matula number $2^a3^b$.
(b) A tree whose Matula number is a finite product of primes
$p_{2^a3^b}$. Each subtree at the root has the form in~(a), with its own
exponents $a_i$ and $b_i$. One such subtree is shaded.}
\label{fig:matula}
\end{figure}

\subsection{Trees and ordinals}
\label{sec:trees-ordinals}

The counting problem above has an interpretation in terms of rooted
trees via the Matula correspondence.
This encoding was introduced by Matula in 1968~\cite{Matula1968} and
rediscovered by G\"obel in 1980~\cite{Gobel1980}.
We recall it in the form used in Section~2.1 of~\cite{VVW2014}.
All trees are finite and rooted, and are identified up to
root-preserving isomorphism.
If $T_1,\ldots,T_q$ are the subtrees rooted at the children of the
root of $T$, its Matula number is defined recursively by
\[
M(\bullet)=1,
\qquad
M(T)=\prod_{i=1}^{q}p_{M(T_i)},
\]
where $\bullet$ denotes the one-vertex tree.
A root with one, two or three leaf children has Matula number $2$, $4$
or $8$, respectively.
Unique prime factorisation makes $M$ a bijection between these trees
and the positive integers. Restricting the prime factors of $M(T)$
therefore amounts to restricting the types of subtree allowed at
the root.

Consider a tree whose root has $a$ leaf children and $b$ further
children, each with a single leaf child (Figure~\ref{fig:matula}(a)).
The corresponding subtrees at the root have Matula numbers $1$ and $2$,
respectively, so this tree has Matula number $p_1^a p_2^b=2^a3^b$.
Consequently, finite products of the primes $p_{2^a3^b}$, including
the empty product, are precisely the Matula numbers of trees whose
subtrees at the root are of this form. When depth and height are
measured in edges from the root, these are exactly the trees of height
at most three in which every vertex at depth two has at most one child
(Figure~\ref{fig:matula}(b)). Here the size bound is on the Matula
number, not on the number of vertices.

The same recursion gives the ordinal coding used in Section~2.2
of~\cite{VVW2014}. That paper posed the problem of strong asymptotics
for the prime codes of ordinals below $\omega^{\omega^k}$, for fixed
$k\ge1$, where $\omega$ is the first infinite ordinal.
The code of $0$ is $1$. A Cantor normal form is coded by multiplying
the primes indexed by the codes of its exponents.
In terms of trees, a leaf represents $0$, and the subtrees at the root
represent the exponents.
The tree in Figure~\ref{fig:matula}(a) therefore represents
$\omega b+a$.
Since every ordinal below $\omega^{\omega^2}$ is a finite sum of terms
of the form $\omega^{\omega b+a}$ in Cantor normal form, these ordinals
correspond bijectively to finite products of the primes $p_{2^a3^b}$,
with the empty product representing $0$.
Thus, for $(r,s)=(2,3)$,
\cref{thm:prime-index-strong-asymptotic} gives an explicit
asymptotic equivalent for the counting function of these ordinal codes
and, equivalently, of the corresponding rooted trees.
This is the $k=2$ case of the problem posed in~\cite{VVW2014}.

\section{Prime weights and Dirichlet series}
\label{sec:prime-weights}

For $v>0$ and $z\in\mathbb C$, we write $v^{-z}=\exp(-z\log v)$.
We also write $z=\sigma+it$ with $\sigma,t\in\mathbb R$.

\subsection{The prime weights}

Given $a,b\in\mathbb N$, define $u_{a,b}=a\log r+b\log s$ and
$\lambda_{a,b}=\log p_{r^a s^b}$.
Then
\[
\min\{\log r,\log s\}(a+b)\le u_{a,b}
\le\max\{\log r,\log s\}(a+b).
\]

We recall the classical asymptotic expansion of the $n$th prime, as in
Eq.~(6.2) of~\cite{AriasDeReynaToulisse2013}. As $n\to\infty$,
\[
p_n=n\left(\log n+\log\log n-1
+O\!\left(\frac{\log\log n}{\log n}\right)\right).
\]
Taking logarithms with $n=r^a s^b$ gives, as $u_{a,b}\to\infty$,
\begin{equation}
\label{eq:prime-weight-real-expansion}
\lambda_{a,b}=u_{a,b}+\log u_{a,b}+\frac{\log u_{a,b}-1}{u_{a,b}}
+O((\log u_{a,b})^2u_{a,b}^{-2}).
\end{equation}
For complex $z$, the next lemma gives the corresponding expansion of
$\lambda_{a,b}^{-z}$ as $u_{a,b}\to\infty$. The bound on the remainder
is uniform in $(a,b)$ and in $z$ on vertical strips.
Three terms of the expansion are kept for the following reason.
There are $m+1$ pairs with $a+b=m$. Hence a remainder of order
$u_{a,b}^{-\sigma-3}$, up to logarithmic factors, sums to a series
convergent for $\sigma>-1$. With two terms, the series would converge
only for $\sigma>0$.
This continuation to $\sigma>-1$ allows us to shift the contour in
Section~\ref{sec:mellin} and to extract the constant term.

\begin{lemma}
\label{lem:prime-weight-expansion}
Let $a,b\in\mathbb N$ with $(a,b)\ne(0,0)$ and let $z\in\mathbb C$. Then
\begin{equation}
\label{eq:prime-weight-complex-expansion}
\lambda_{a,b}^{-z}=u_{a,b}^{-z}-z(\log u_{a,b})u_{a,b}^{-z-1}
+\left[\frac{z(z+1)}2(\log u_{a,b})^2-z\log u_{a,b}+z\right]u_{a,b}^{-z-2}+R_{a,b}(z),
\end{equation}
where $R_{a,b}$ is entire. On each strip
$\sigma_-\le\sigma\le\sigma_+$, with $\sigma_-<\sigma_+$, we have
uniformly in $(a,b)$
\[
|R_{a,b}(z)|\ll_{\sigma_-,\sigma_+}
(1+|z|)^3u_{a,b}^{-\sigma-3}(1+|\log u_{a,b}|)^3.
\]
\end{lemma}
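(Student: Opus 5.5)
Write $\lambda_{a,b}=u(1+\delta)$ with $u=u_{a,b}$ and $\delta=\delta_{a,b}=(\lambda_{a,b}-u)/u$, so that $\lambda_{a,b}^{-z}=u^{-z}\exp(-z\log(1+\delta))$. The plan is to expand the exponential in powers of $\delta$ to third order and substitute the real expansion \eqref{eq:prime-weight-real-expansion}. By that expansion,
\[
\delta=\frac{\log u}{u}+\frac{\log u-1}{u^2}+O\bigl((\log u)^2u^{-3}\bigr),
\]
and in particular $\delta\ll(1+|\log u|)/u$.

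Define $R_{a,b}(z)$ as $\lambda_{a,b}^{-z}$ minus the three displayed terms. Each of these is entire in $z$, since $u>0$ and $\lambda_{a,b}>0$, so $R_{a,b}$ is entire. Only the bound needs proof.

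\textbf{Large $u$.} Here $u\ge u_0$ with $u_0$ so large that $|\delta|\le1/2$. Put $L=\log(1+\delta)$, so $L=\delta-\delta^2/2+O(\delta^3)$. Then
\[
\frac{\log u}{u}+\frac{\log u-1}{u^2}-\frac{(\log u)^2}{2u^2}+O\bigl((\log u)^3u^{-3}\bigr).
\]
is the expansion of $L$, and $|L|\ll(\log u)/u$ is small. Taylor's theorem with remainder gives
\[
e^{-zL}=1-zL+\tfrac{z^2}{2}L^2+E,\qquad |E|\le\tfrac{|zL|^3}{6}e^{|zL|}.
\]
Inserting the expansion of $L$, the terms of order up to $u^{-2}$ are
\[
-z\frac{\log u}{u}-z\frac{\log u-1}{u^2}+z\frac{(\log u)^2}{2u^2}+\frac{z^2(\log u)^2}{2u^2}.
\]
This equals $-z(\log u)u^{-1}+\bigl[\tfrac{z(z+1)}2(\log u)^2-z\log u+z\bigr]u^{-2}$, which matches \eqref{eq:prime-weight-complex-expansion}. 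Every other contribution is $\ll(1+|z|)^3(\log u)^3u^{-3}$, provided the factor $e^{|zL|}$ is controlled.

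\textbf{Main obstacle: uniformity in $z$.} The factor $e^{|zL|}$ is only harmless when $|z|(\log u)/u\le1$. When $|z|$ is large compared with $u/\log u$, I would not Taylor expand. Instead I would bound each term of \eqref{eq:prime-weight-complex-expansion} directly:
\[
|\lambda_{a,b}^{-z}|=\lambda_{a,b}^{-\sigma}\asymp_{\sigma_\pm}u^{-\sigma},
\]
and each correction term is $\ll(1+|z|)^2(1+\log u)^2u^{-\sigma-1}$. In this regime $1\le|z|(\log u)/u$, so multiplying any of these bounds by $(|z|\log u/u)^k$ for suitable $k\le3$ only enlarges them, and all of them fall within $(1+|z|)^3u^{-\sigma-3}(1+\log u)^3$. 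In the complementary regime $|z|\log u/u\le1$, the factor $e^{|zL|}$ is $O(1)$. Also $\lambda^{-\sigma}$ and $u^{-\sigma}$ are comparable uniformly for $\sigma$ in the strip, so the Taylor bound applies.

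\textbf{Small $u$.} The finitely many pairs with $u_{a,b}<u_0$ (finitely many because $u\ge\min\{\log r,\log s\}(a+b)$) are handled separately. For these, $u\ge\min\{\log r,\log s\}>0$ is bounded above and below. Each of $\lambda^{-z}$ and the explicit terms is then bounded in modulus by $O((1+|z|)^2)$ on the strip, which is $\ll(1+|z|)^3u^{-\sigma-3}$ since $u^{-\sigma-3}\asymp1$ there. The main care is the case split in $|z|$ relative to $u/\log u$; the rest is bookkeeping.
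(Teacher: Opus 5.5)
Your argument works after one small repair, noted at the end. Its skeleton is the paper's: define $R_{a,b}$ as the difference, expand for large $u_{a,b}$, and absorb the finitely many small $u_{a,b}$. The difference lies in how the third-order remainder is made uniform in $z$.

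The paper Taylor-expands $v\mapsto(1+v)^{-z}$ at $v=0$ with integral remainder
\[
-\frac{z(z+1)(z+2)}{2}\eta^3\int_0^1(1-\theta)^2(1+\theta\eta)^{-z-3}\,\mathrm{d}\theta .
\]
Since $1+\theta\eta$ is real and lies in $[1/2,3/2]$, the modulus $(1+\theta\eta)^{-\sigma-3}$ depends only on $\sigma$. Uniformity in $z$ is then automatic, and no case distinction is needed.

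Your ``main obstacle'' comes from the Lagrange-type bound $|w|^3e^{|w|}/6$. Suppose you instead use the integral form $E=\frac{w^3}{2}\int_0^1(1-\theta)^2e^{\theta w}\,\mathrm{d}\theta$ with $w=-zL$. Then $|e^{\theta w}|=e^{-\theta\sigma L}$, which is bounded on the strip because $L$ is real and small. This is exactly the paper's mechanism, and your split becomes unnecessary. The split in $X=|z|(\log u)/u$ is nevertheless valid. It gives a proof that needs only the crude remainder bound, at the cost of an extra regime.

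In the regime $X>1$, one step needs correcting. You bound both correction terms by $(1+|z|)^2(1+\log u)^2u^{-\sigma-1}$, which is too weak. Against the target $(1+|z|)^3(1+\log u)^3u^{-\sigma-3}$, this bound exceeds it by a factor $\asymp u/X$, which is unbounded when $X$ stays bounded (say $X=2$). So no power $X^k$ with $k\le3$ rescues it.

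Use instead the actual sizes of the terms. In this regime $|z|>u/\log u$ is large, so $1+|z|\le2|z|$. Then $\lambda_{a,b}^{-z}$ and $u^{-z}$ are $\ll u^{-\sigma}$, the first correction is $u^{-\sigma}X$, and the second correction is $\ll u^{-\sigma}X^2$. Since $X>1$, each is at most $u^{-\sigma}X^3\le(1+|z|)^3(\log u)^3u^{-\sigma-3}$, which gives the bound.
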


\begin{proof}
Fix a strip $\sigma_-\le\sigma\le\sigma_+$.
For large $u_{a,b}$, put $\eta=\lambda_{a,b}/u_{a,b}-1$. Then $|\eta|\le1/2$ and
\[
\eta=\frac{\log u_{a,b}}{u_{a,b}}+\frac{\log u_{a,b}-1}{u_{a,b}^2}
+O((\log u_{a,b})^2u_{a,b}^{-3}),\qquad
\eta^2=\frac{(\log u_{a,b})^2}{u_{a,b}^2}+O((\log u_{a,b})^2u_{a,b}^{-3}).
\]
Taylor's formula with integral remainder, applied to
$(1+v)^{-z}$ at $v=0$, gives
\[
(1+\eta)^{-z}=1-z\eta+\frac{z(z+1)}2\eta^2
-\frac{z(z+1)(z+2)}2\eta^3\int_0^1(1-\theta)^2(1+\theta\eta)^{-z-3}\,\mathrm{d}\theta.
\]
Since $\eta$ is real and $1+\theta\eta\in[1/2,3/2]$, the modulus
$|(1+\theta\eta)^{-z-3}|=(1+\theta\eta)^{-\sigma-3}$ is bounded on the strip.
Thus the integral remainder is $O((1+|z|)^3(\log u_{a,b})^3u_{a,b}^{-3})$.
Substitute the estimates for $\eta$ and $\eta^2$, then multiply
by~$u_{a,b}^{-z}$.
This proves \eqref{eq:prime-weight-complex-expansion} and the bound for
large $u_{a,b}$.
Only finitely many pairs $(a,b)$ remain, and for each of them
$R_{a,b}(z)=O((1+|z|)^2)$ on the strip.
These pairs can be absorbed into the implied constant.
The formula defining $R_{a,b}$ shows that it is entire.
\end{proof}

For $\sigma>-1$, put
\[
\mathcal H(z)=\sum_{(a,b)\ne(0,0)}R_{a,b}(z).
\]
There are $m+1$ pairs with $a+b=m\ge1$.
Hence, on a strip $-1<\sigma_-\le\sigma\le\sigma_+$,
\[
\sum_{(a,b)\ne(0,0)}|R_{a,b}(z)|
\ll_{\sigma_-,\sigma_+}(1+|z|)^3
\sum_{m\ge1}\frac{(m+1)(1+\log m)^3}{m^{\sigma_-+3}}.
\]
The series on the right converges. Thus the series for $\mathcal H$
converges absolutely and locally uniformly in $\sigma>-1$, so its sum is
holomorphic there. On each strip $\sigma_-\le\sigma\le\sigma_+$ with
$-1<\sigma_-<\sigma_+$, we also have
$|\mathcal H(z)|\ll_{\sigma_-,\sigma_+}(1+|z|)^3$.

\subsection{The lattice zeta function}

Let $\Gamma$ denote the gamma function.
For $\sigma>2$, the lattice zeta function is
\[
Z(z)=\sum_{(a,b)\ne(0,0)}u_{a,b}^{-z}.
\]
Put
\[
c_2=\frac1{\log r\log s},\qquad
c_1=\frac{\log(rs)}{2\log r\log s},\qquad
c_0=\frac{\log r}{12\log s}+\frac{\log s}{12\log r}-\frac34.
\]

\begin{lemma}
\label{lem:lattice-zeta-continuation}
The function $Z$ has a meromorphic continuation to $\sigma>-1$,
with simple poles at $2$ and $1$ and no other poles. Moreover,
$\Res_{z=2}Z(z)=c_2$, $\Res_{z=1}Z(z)=c_1$ and $Z(0)=c_0$.
\end{lemma}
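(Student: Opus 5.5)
We use the Mellin (Gamma-function) representation of the Barnes double zeta function. Write $\alpha=\log r$ and $\beta=\log s$. For $\sigma>2$,
\[
\Gamma(z)Z(z)=\int_0^\infty t^{z-1}\bigl(G(t)-1\bigr)\,\mathrm dt,
\qquad
G(t)=\sum_{a,b\ge0}e^{-t(a\alpha+b\beta)}=\frac{1}{(1-e^{-\alpha t})(1-e^{-\beta t})}.
\]
Interchanging sum and integral is legitimate because all terms are positive.

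\textbf{Step 1: Laurent expansion of $G$ at $t=0$.} Using $\frac{1}{1-e^{-x}}=\frac1x+\frac12+\frac{x}{12}+O(x^3)$, we get
\[
G(t)=\frac{1}{\alpha\beta t^2}+\frac{\alpha+\beta}{2\alpha\beta\,t}+\Bigl(\frac14+\frac{\alpha^2+\beta^2}{12\alpha\beta}\Bigr)+O(t).
\]

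\textbf{Step 2: split the integral at $t=1$.} The piece $\int_1^\infty$ is entire, since $G(t)-1$ decays exponentially. In $\int_0^1$, subtract the singular part $P(t)=c_2t^{-2}+c_1t^{-1}+(d_0-1)$, where
\[
d_0=\frac14+\frac{\alpha^2+\beta^2}{12\alpha\beta}.
\]
The remainder $G-1-P=O(t)$, so its integral is holomorphic for $\sigma>-1$. The subtracted part integrates explicitly:
\[
\int_0^1 t^{z-1}P(t)\,\mathrm dt=\frac{c_2}{z-2}+\frac{c_1}{z-1}+\frac{d_0-1}{z}.
\]
This gives the meromorphic continuation of $\Gamma(z)Z(z)$ to $\sigma>-1$, with at most simple poles at $2$, $1$ and $0$.

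\textbf{Step 3: divide by $\Gamma$.} Since $1/\Gamma$ is entire, $Z$ continues meromorphically to $\sigma>-1$. Because $\Gamma(2)=\Gamma(1)=1$, the residues are $\operatorname{Res}_{z=2}Z=c_2$ and $\operatorname{Res}_{z=1}Z=c_1$. These agree with the stated values, since $c_1=\frac{\alpha+\beta}{2\alpha\beta}=\frac{\log(rs)}{2\log r\log s}$.

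At $z=0$, $\Gamma$ has a simple pole with residue $1$, so $1/\Gamma(z)=z+O(z^2)$. The simple pole of $\Gamma Z$ at $0$, with residue $d_0-1$, is therefore cancelled. Hence $Z$ is holomorphic at $0$ with
\[
Z(0)=d_0-1=\frac{\alpha}{12\beta}+\frac{\beta}{12\alpha}-\frac34=c_0.
\]
No other poles occur in $\sigma>-1$, because $\Gamma$ has no zeros.

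\textbf{Main obstacle.} The only delicate point is careful bookkeeping of the constant term at $z=0$. One must include the $-1$ coming from removing $(a,b)=(0,0)$, and the $\tfrac14$ coming from the product $\tfrac12\cdot\tfrac12$ of the two expansions. Multiplicative independence is not needed for this lemma; it only ensures elsewhere that the indices $r^a s^b$ are distinct.
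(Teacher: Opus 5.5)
Your proof is correct and is essentially the paper's argument: you write $\Gamma(z)Z(z)$ as the Mellin transform of $1/((1-r^{-y})(1-s^{-y}))-1$, split at $y=1$, subtract the singular part $c_2y^{-2}+c_1y^{-1}+c_0$, and divide by the entire function $1/\Gamma$ to read off the residues and $Z(0)=c_0$. One cosmetic point: your auxiliary constant $d_0=1+c_0$ clashes with the paper's later $d_0=1+c_0+c_2$, so rename it if the two are used together.
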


\begin{proof}
The estimate $u_{a,b}\asymp a+b$ gives absolute and locally uniform
convergence for $\sigma>2$, by comparison with
$\sum_{m\ge1}(m+1)m^{-\sigma}$.
For $y>0$, put
\[
K(y)=\sum_{(a,b)\ne(0,0)}e^{-u_{a,b}y}
=\frac1{(1-r^{-y})(1-s^{-y})}-1.
\]
As $\xi\to0$, we have
$(1-e^{-\xi})^{-1}=\xi^{-1}+1/2+\xi/12+O(\xi^3)$.
Applying this with $\xi=y\log r$ and $\xi=y\log s$ gives, as $y\to0^+$,
\[
K(y)=c_2y^{-2}+c_1y^{-1}+c_0+O(y).
\]
Also, $K(y)=O(\min\{r,s\}^{-y})$ as $y\to\infty$.
For $\sigma>2$,
\[
\sum_{(a,b)\ne(0,0)}\int_0^\infty
y^{\sigma-1}e^{-u_{a,b}y}\,\mathrm{d}y=\Gamma(\sigma)Z(\sigma)<\infty.
\]
We may therefore interchange summation and integration, so that
$\Gamma(z)Z(z)=\int_0^\infty y^{z-1}K(y)\,\mathrm{d}y$ for $\sigma>2$.
Splitting this integral at $1$ gives
\begin{equation}
\label{eq:lattice-zeta-representation}
Z(z)=\frac1{\Gamma(z)}\bigg[\frac{c_2}{z-2}+\frac{c_1}{z-1}+\frac{c_0}{z}
+\int_1^\infty y^{z-1}K(y)\,\mathrm{d}y
+\int_0^1y^{z-1}\bigl(K(y)-c_2y^{-2}-c_1y^{-1}-c_0\bigr)\,\mathrm{d}y\bigg].
\end{equation}
Both integrals in~\eqref{eq:lattice-zeta-representation} are holomorphic
in $\sigma>-1$, by the estimates for $K$ at $0$ and at $\infty$.
Since $1/\Gamma$ is entire, this gives the continuation of $Z$.
The residues at $2$ and $1$ follow from $\Gamma(2)=\Gamma(1)=1$.
At zero, $1/\Gamma(z)=z+O(z^2)$ gives $Z(0)=c_0$.
\end{proof}

Next, we prove the bounds on vertical strips needed for the Mellin
contour shift.

\begin{lemma}
\label{lem:lattice-zeta-growth}
Let $j\in\mathbb N$, let $-1<\sigma_-<\sigma_+$, and let $z$ satisfy
$\sigma_-\le\sigma\le\sigma_+$ and $|t|\ge1$. Then
\[
|Z^{(j)}(z)|\ll_{j,\sigma_-,\sigma_+}(1+|t|)^4.
\]
\end{lemma}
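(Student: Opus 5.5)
We bound $Z^{(j)}$ on the strip $\sigma_-\le\sigma\le\sigma_+$, $|t|\ge1$. The representation \eqref{eq:lattice-zeta-representation} is not suitable for this, because $1/\Gamma(z)$ grows like $e^{\pi|t|/2}$. Instead we continue $Z$ by the Euler--Maclaurin summation formula in each variable, which gives polynomial growth directly.

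The first step is to write $Z(z)=\sum_{b\ge0}\sum_{a\ge0}{}' (a\log r+b\log s)^{-z}$, where the prime excludes $(0,0)$. We split off the terms with $a=0$ and those with $b=0$. These are $(\log s)^{-z}\zeta(z)$ and $(\log r)^{-z}\zeta(z)$. On the strip, $\zeta(z)\ll(1+|t|)^{1}$ for $\sigma\ge-1+\delta$ by the convexity bound, which is far below $(1+|t|)^4$. The remaining sum runs over $a,b\ge1$. Here we apply Euler--Maclaurin in $a$ with the function $f(y)=(y\log r+u)^{-z}$, where $u=b\log s\ge\log s$, to order two:
\[
\sum_{a\ge1}f(a)=\int_1^\infty f+\tfrac12 f(1)-\tfrac1{12}f'(1)+\tfrac12\int_1^\infty B_2(\{y\})f''(y)\,\mathrm dy .
\]
We may take more terms if needed. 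Each term is explicit. The integral equals $\frac{(\log r+u)^{1-z}}{(z-1)\log r}$. The derivatives bring factors $z(z+1)\cdots$ of size $(1+|t|)^k$, together with powers $(\log r+u)^{-z-k}$. The remainder integral is bounded by $(1+|t|)^2\sum_b u^{-\sigma-1}$-type quantities, which converge once $\sigma>0$.

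The second step repeats this in $b$ for each resulting piece. Each piece is a function $g(b)=(\log r+b\log s)^{-w}$ with $w\in\{z-1,z,z+1,\dots\}$, or an integral remainder in $y$ that is absolutely convergent after summation over $b$. Summation in $b$ of such a $g$ is a Hurwitz-type zeta value $(\log s)^{-w}\zeta(w,1+\log r/\log s)$. It is handled in the same way by Euler--Maclaurin, or by the standard bound $\zeta(w,\alpha)\ll_{\alpha}(1+|t|)^{\max(0,(1-\mathrm{Re}\,w)/2)+1}$. Altogether $Z(z)$ becomes a finite sum of terms, each a polynomial in $z$ of degree at most about $4$ divided by $(z-1)$ or $(z-2)$, multiplied by a bounded exponential $(\cdot)^{-z}$ or by a convergent tail. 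This reproduces the meromorphic continuation of \cref{lem:lattice-zeta-continuation} to $\sigma>-1$ and gives $|Z(z)|\ll(1+|t|)^4$ there. The main bookkeeping is to choose the Euler--Maclaurin order so that the double remainder $\int\!\!\int B_2B_2\,\partial_y^2\partial_x^2(\cdot)$ converges absolutely for $\sigma>-1$. That double derivative is $\ll |z|^4(\cdot)^{-\sigma-4}$, which is integrable over the quadrant exactly when $\sigma>-2$. This is where the exponent $4$ comes from.

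For derivatives, the cleanest route is Cauchy's estimate. We apply the bound $|Z(w)|\ll(1+|t|)^4$ on a slightly wider strip, with $\sigma_-'=(\sigma_-+(-1))/2$ on the left and $\sigma_++1$ on the right. A circle of radius $\rho=\min\{(\sigma_-+1)/2,1/2\}$ around $z$ with $|t|\ge1$ stays in that strip and away from the real poles $1$ and $2$ once $\rho<1$, since then $|\mathrm{Im}|\ge1-\rho>0$. Hence $|Z^{(j)}(z)|\le j!\rho^{-j}\max_{|w-z|=\rho}|Z(w)|\ll_{j}(1+|t|)^4$. I expect the main obstacle to be the careful two-variable Euler--Maclaurin bookkeeping: keeping every boundary term's growth at most $(1+|t|)^4$ and confirming absolute convergence of the mixed remainder uniformly down to $\sigma_-'>-1$. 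If that becomes messy, a fallback is a Phragmén--Lindelöf interpolation between $\sigma>2$, where $Z$ is bounded, and a line $\sigma=-1+\delta$, where the crude Euler--Maclaurin bound already gives polynomial growth.
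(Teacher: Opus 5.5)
Your route works in principle, but it is not the paper's. As written, the bookkeeping does not close at second order.

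\textbf{Comparison with the paper.} The paper uses Euler--Maclaurin in one variable only, to third order (with $\widetilde B_3$). It applies it to $\sum_{b\ge0}(a\log r+b\log s)^{-z}$ for each $a\ge1$, with the boundary point at $q=a\log r$. After splitting off the $a=0$ terms and summing over $a$, the boundary terms become exactly $(\log r)^{1-z}\zeta(z-1)/((\log s)(z-1))$, $\tfrac12(\log r)^{-z}\zeta(z)$ and $\tfrac{z\log s}{12}(\log r)^{-z-1}\zeta(z+1)$. The remainder is $\ll|z|^3(a\log r)^{-\sigma-2}$, which is summable over $a$ for $\sigma>-1$. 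The same formula with $q=h=1$ gives $\zeta(w)\ll(1+|\Im w|)^3$ for $\Re w>-2$ and $|\Im w|\ge\tfrac12$. The exponent $4$ then comes from $z\zeta(z+1)$, and the Cauchy step is the same as yours.

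You start the $a$-sum at $a=1$ with $u=b\log s$, so the boundary sits at $\log r+b\log s$. This produces Hurwitz zeta values and forces a second Euler--Maclaurin pass in $b$. The paper's placement of the boundary avoids both, and it needs no convexity or functional-equation input. Your two-variable version does give a symmetric, fully explicit continuation of $Z$. For a mere polynomial bound, however, it is more work than needed.

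\textbf{Points to correct.}

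First, at second order in $a$, apply Euler--Maclaurin in $b$ to the $a$-remainder $R(b)$. Besides the double remainder you checked, this produces the main term $\int_1^\infty R(x)\,\mathrm dx$. For $\sigma>0$ this is a constant multiple of $z\int_1^\infty\widetilde B_2(y)(y\log r+\log s)^{-z-1}\,\mathrm dy$, which converges absolutely only for $\sigma>0$, not for $\sigma>-1$. Your phrase ``absolutely convergent after summation over $b$'' also contradicts your own first step, where you found convergence only for $\sigma>0$. Going to third order in $a$ fixes this. The remainder then becomes $\ll|z|^3u^{-\sigma-2}$, which is summable over $b$ for $\sigma>-1$. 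No double remainder arises at all, so the exponent $4$ is not explained by it.

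Second, the convexity bound does not give $\zeta(z)\ll1+|t|$ down to $\sigma=-1+\delta$. For $\sigma<0$ it gives $|t|^{1/2-\sigma}$, which is sharp by the functional equation, so the bound is up to $|t|^{3/2}$ near $\sigma=-1$. This is harmless for the exponent $4$. Similarly, your Hurwitz bound is not a standard statement in that form. It does hold for $\Re w\ge-2$, which is enough here, but the crude Euler--Maclaurin bound $O((1+|t|)^3)$ is simpler and self-contained.
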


\begin{proof}
For $v\ge0$, take $f(v)=(q+hv)^{-z}$, where $q,h>0$ and $\sigma>1$.

Let $\{v\}=v-\lfloor v\rfloor$, and put
$\widetilde B_2(v)=\{v\}^2-\{v\}+\tfrac16$ and
$\widetilde B_3(v)=\{v\}^3-\tfrac32\{v\}^2+\tfrac12\{v\}$.
Euler--Maclaurin summation, applied to $f$ on $[0,M]$ and followed by
$M\to\infty$, gives
\[
\sum_{n\ge0}f(n)=\int_0^\infty f(v)\,\mathrm{d}v+\frac{f(0)}2
-\frac{f'(0)}{12}-\frac12\int_0^\infty\widetilde B_2(v)f''(v)\,\mathrm{d}v.
\]
The functions $\widetilde B_2$ and $\widetilde B_3$ are bounded, with
$\widetilde B_3'=3\widetilde B_2$ and $\widetilde B_3(0)=0$.
Integration by parts, with vanishing boundary terms, transforms the last
term into $\tfrac16\int_0^\infty\widetilde B_3(v)f'''(v)\,\mathrm{d}v$.
Therefore,
\[
\sum_{n\ge0}(q+hn)^{-z}=\frac{q^{1-z}}{h(z-1)}+\frac{q^{-z}}2+\frac{hz}{12}q^{-z-1}
-\frac{h^3z(z+1)(z+2)}6\int_0^\infty\widetilde B_3(v)(q+hv)^{-z-3}\,\mathrm{d}v.
\]
Split off the terms with $a=0$ in $Z$, and apply the formula with
$q=a\log r$ and $h=\log s$ to the remaining terms. Summing over $a\ge1$ gives,
for $\sigma>2$,
\[
Z(z)=\left((\log s)^{-z}+\frac{(\log r)^{-z}}2\right)\zeta(z)
+\frac{(\log r)^{1-z}}{(\log s)(z-1)}\zeta(z-1)
+\frac{z\log s}{12}(\log r)^{-z-1}\zeta(z+1)+\mathcal E(z),
\]
where
\[
\mathcal E(z)=-\frac{(\log s)^3z(z+1)(z+2)}6
\sum_{a\ge1}\int_0^\infty
\widetilde B_3(v)(a\log r+v\log s)^{-z-3}\,\mathrm{d}v.
\]
For $\sigma>-1$,
\[
\sum_{a\ge1}\int_0^\infty(a\log r+v\log s)^{-\sigma-3}\,\mathrm{d}v
=\frac{(\log r)^{-\sigma-2}}{(\log s)(\sigma+2)}\zeta(\sigma+2)<\infty.
\]
This proves that $\mathcal E$ is holomorphic in $\sigma>-1$ and satisfies
$\mathcal E(z)=O((1+|z|)^3)$ on each closed strip there.
By analytic continuation, the formula for $Z$ holds in this half-plane.

The same Euler--Maclaurin formula, with $q=h=1$, gives
\[
\zeta(w)=\frac1{w-1}+\frac12+\frac w{12}
-\frac{w(w+1)(w+2)}6
\int_0^\infty\widetilde B_3(v)(1+v)^{-w-3}\,\mathrm{d}v.
\]
For $\Re w>-2$, the integral is bounded by
$\|\widetilde B_3\|_\infty/(\Re w+2)$.
Thus $\zeta(w)=O((1+|\Im w|)^3)$ on closed strips in that half-plane,
provided $|\Im w|\ge1/2$.
Substituting this bound into the formula for $Z$ gives, for
$\sigma_-\le\sigma\le\sigma_+$ and $|t|\ge1/2$,
\[
Z(\sigma+it)=O_{\sigma_-,\sigma_+}((1+|t|)^4).
\]
Choose a radius $\varrho$ with $0<\varrho<\min\{(\sigma_-+1)/2,1/2\}$.
For $|t|\ge1$, the disk $|w-z|\le \varrho$ contains no pole of $Z$,
by \cref{lem:lattice-zeta-continuation}, and lies in
the larger strip $\sigma_--\varrho\le\Re w\le\sigma_++\varrho$.

Cauchy's estimate gives
\[
|Z^{(j)}(z)|\le\frac{j!}{\varrho^j}\max_{|w-z|=\varrho}|Z(w)|
\ll_{j,\sigma_-,\sigma_+}(1+|t|)^4.
\]
\end{proof}

\subsection{The Dirichlet series of the prime weights}

For $\sigma>2$, the Dirichlet series of the prime weights is
\[
D(z)=\sum_{a,b\ge0}\lambda_{a,b}^{-z}
=(\log2)^{-z}+\sum_{(a,b)\ne(0,0)}\lambda_{a,b}^{-z},
\]
since $\lambda_{0,0}=\log p_1=\log2$.
Put
\[
\mathscr R=\sum_{(a,b)\ne(0,0)}
\left[
\log\frac{\lambda_{a,b}}{u_{a,b}}
-\frac{\log u_{a,b}}{u_{a,b}}
+\frac{\frac12(\log u_{a,b})^2-\log u_{a,b}+1}{u_{a,b}^2}
\right].
\]
Define the regular parts of $Z$ at $1$ and $2$, shifted to the origin,
by
\[
Z_1(z)=Z(1+z)-\frac{c_1}{z},
\qquad
Z_2(z)=Z(2+z)-\frac{c_2}{z}.
\]
Both have a removable singularity at zero. Finally, put
\[
\begin{aligned}
d_0&=1+c_0+c_2,\\
d_1&=-\log(\log2)+Z'(0)+Z_1'(0)+Z_2(0)+Z_2'(0)
+\tfrac12Z_2''(0)-\mathscr R.
\end{aligned}
\]

\begin{proposition}
\label{prop:prime-weight-dirichlet-continuation}
The function $D$ has a meromorphic continuation to $\sigma>-1$.
Its only poles are at $0$, $1$ and $2$, with Laurent expansions
\begin{align*}
D(z)&=\frac{c_2}{z-2}+O(1) &&\text{as }z\to2,\\
D(z)&=-\frac{c_2}{(z-1)^2}+\frac{c_1-c_2}{z-1}+O(1) &&\text{as }z\to1,\\
D(z)&=\frac{c_2}{z^2}-\frac{c_1}{z}+d_0+d_1z+O(z^2) &&\text{as }z\to0.
\end{align*}
\end{proposition}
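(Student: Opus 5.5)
The plan is to sum the expansion of \cref{lem:prime-weight-expansion} over all $(a,b)\ne(0,0)$ and recognise each main term as a shifted derivative of the lattice zeta function. For $\sigma>2$, termwise differentiation of the absolutely convergent series for $Z$ gives
\[
\sum u_{a,b}^{-w}\log u_{a,b}=-Z'(w),\qquad
\sum u_{a,b}^{-w}(\log u_{a,b})^2=Z''(w).
\]
Hence, for $\sigma>2$,
\[
D(z)=(\log2)^{-z}+Z(z)+zZ'(z+1)+\frac{z(z+1)}2Z''(z+2)+zZ'(z+2)+zZ(z+2)+\mathcal H(z).
\]
By \cref{lem:lattice-zeta-continuation}, $Z(z)$ is meromorphic in $\sigma>-1$. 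The shifts $Z(z+1)$ and $Z(z+2)$ are meromorphic in $\sigma>-2$ and $\sigma>-3$, respectively, and $\mathcal H$ is holomorphic in $\sigma>-1$. The right-hand side therefore gives the continuation of $D$ to $\sigma>-1$. Its only possible poles come from the poles $1,2$ of $Z$ and its derivatives, and these can occur only at $z\in\{0,1,2\}$.

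Next I read off the Laurent expansions using $Z(w)=c_2/(w-2)+c_1/(w-1)+(\text{holomorphic})$ near $1$ and $2$.

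At $z=2$, only $Z(z)$ is singular, since $z+1=3$ and $z+2=4$ are regular points. This gives the simple pole with residue $c_2$.

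At $z=1$, the term $Z(z)$ contributes $c_1/(z-1)$. The term $zZ'(z+1)$, with $Z'(w)=-c_2/(w-2)^2+O(1)$ near $w=2$, contributes
\[
-\frac{c_2z}{(z-1)^2}=-\frac{c_2}{(z-1)^2}-\frac{c_2}{z-1}.
\]
The sum is the stated double pole.

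At $z=0$, I use $Z_1,Z_2$. Write $Z'(1+z)=-c_1/z^2+Z_1'(z)$, $Z(2+z)=c_2/z+Z_2(z)$, $Z'(2+z)=-c_2/z^2+Z_2'(z)$ and $Z''(2+z)=2c_2/z^3+Z_2''(z)$. Multiplying out gives the following contributions:
\begin{itemize}
\item $zZ'(z+1)$ gives $-c_1/z+zZ_1'(z)$;
\item $\tfrac{z(z+1)}2Z''(z+2)$ gives $c_2/z^2+c_2/z+\tfrac{z(z+1)}2Z_2''(z)$;
\item $zZ'(z+2)$ gives $-c_2/z+zZ_2'(z)$;
\item $zZ(z+2)$ gives $c_2+zZ_2(z)$.
\end{itemize}
The $c_2/z$ terms cancel, leaving the principal part $c_2/z^2-c_1/z$.

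For the constant term, I collect $1$ from $(\log2)^{-z}$, $Z(0)=c_0$, and $c_2$. I also need $\mathcal H(0)=0$, which holds because $\lambda^{0}=u^{0}=1$ forces $R_{a,b}(0)=0$. This gives $d_0=1+c_0+c_2$.

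For the linear coefficient, I collect the following:
\begin{itemize}
\item $-\log\log2$ from $(\log2)^{-z}$;
\item $Z'(0)$;
\item $Z_1'(0)$;
\item $\tfrac12Z_2''(0)$ from $\tfrac{z(z+1)}2Z_2''(z)$;
\item $Z_2'(0)$;
\item $Z_2(0)$;
\item $\mathcal H'(0)$.
\end{itemize}
It remains to show $\mathcal H'(0)=-\mathscr R$. Local uniform convergence of $\sum R_{a,b}$ on strips in $\sigma>-1$ permits termwise differentiation. Differentiating \eqref{eq:prime-weight-complex-expansion} at $z=0$ gives
\[
R_{a,b}'(0)=-\log\lambda_{a,b}+\log u_{a,b}+\frac{\log u_{a,b}}{u_{a,b}}-\frac{\tfrac12(\log u_{a,b})^2-\log u_{a,b}+1}{u_{a,b}^2},
\]
which is exactly minus the summand of $\mathscr R$. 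This also shows that $\mathscr R$ converges absolutely. So $d_1$ is as stated.

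The main obstacle is the careful bookkeeping at $z=0$, where four singular terms interact, together with justifying the identity for $D$. The identity is first proved for $\sigma>2$, where all series converge absolutely and derivatives may be taken termwise. It then extends to $\sigma>-1$ by uniqueness of analytic continuation, using the holomorphy of $\mathcal H$ established after \cref{lem:prime-weight-expansion}.
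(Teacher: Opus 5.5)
Your proposal is correct and follows the paper's proof essentially step for step. It uses the same decomposition of $D$ into $(\log2)^{-z}$, shifted derivatives of $Z$, and $\mathcal H$; the same bookkeeping at $z=0$ through $Z_1$ and $Z_2$; and the same identification $\mathcal H(0)=0$, $\mathcal H'(0)=-\mathscr R$ by termwise differentiation. The one step stated too quickly is the absolute convergence of $\mathscr R$, which needs a line of justification, for instance Cauchy's estimate on a small circle about $0$ applied to the uniform bound of \cref{lem:prime-weight-expansion}. The paper instead expands $\log(\lambda_{a,b}/u_{a,b})$ directly to see that the summand is $O((\log u_{a,b})^3u_{a,b}^{-3})$, which also gives the tail bound used later for $\mathfrak C_T$.
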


\begin{proof}
By~\eqref{eq:prime-weight-real-expansion},
$\lambda_{a,b}\asymp u_{a,b}$ for $(a,b)\ne(0,0)$.
By comparison with $Z$, the series $D$ converges absolutely and locally
uniformly in $\sigma>2$. In this half-plane, we may differentiate $Z$ term by term
and sum \eqref{eq:prime-weight-complex-expansion} to obtain
\begin{equation}
\label{eq:prime-weight-dirichlet-decomposition}
D(z)=(\log2)^{-z}+Z(z)+zZ'(z+1)
+\frac{z(z+1)}2Z''(z+2)+zZ'(z+2)+zZ(z+2)+\mathcal H(z).
\end{equation}
\cref{lem:lattice-zeta-continuation} and the holomorphy of $\mathcal H$
give the continuation to $\sigma>-1$.
The only possible poles in this half-plane are $0$, $1$ and $2$.

At zero, \eqref{eq:prime-weight-complex-expansion} gives $R_{a,b}(0)=0$.
Expansion~\eqref{eq:prime-weight-real-expansion} and Taylor's formula
for the logarithm give, as $u_{a,b}\to\infty$,
\[
\log\frac{\lambda_{a,b}}{u_{a,b}}=\frac{\log u_{a,b}}{u_{a,b}}
+\frac{\log u_{a,b}-1-\frac12(\log u_{a,b})^2}{u_{a,b}^2}
+O\!\left(\frac{(\log u_{a,b})^3}{u_{a,b}^3}\right).
\]
Differentiating \eqref{eq:prime-weight-complex-expansion} at zero shows
that $-R'_{a,b}(0)$ is the summand of $\mathscr R$.
Using the bounds for $u_{a,b}$ and grouping terms by $m=a+b$, we obtain,
as $T\to\infty$,
\[
\sum_{\substack{(a,b)\ne(0,0)\\u_{a,b}>T}}|R'_{a,b}(0)|
\ll\sum_{m>T/\max\{\log r,\log s\}}\frac{(m+1)(1+\log m)^3}{m^3}
\ll\frac{(\log T)^3}{T}.
\]
Thus $\mathscr R$ converges absolutely. Since the series for
$\mathcal H$ converges locally uniformly, it may be differentiated term
by term, so $\mathcal H(0)=0$ and $\mathcal H'(0)=-\mathscr R$.

At $z=2$, only the term $Z(z)$
in~\eqref{eq:prime-weight-dirichlet-decomposition} has a pole.
At $z=1$, use
\[
Z(z)=\frac{c_1}{z-1}+O(1),
\qquad
zZ'(z+1)=-\frac{c_2}{(z-1)^2}-\frac{c_2}{z-1}+O(1).
\]
These give the first two expansions.
Near $z=0$, we have $zZ'(1+z)=-c_1/z+zZ_1'(z)$ and
\[
\begin{aligned}
\frac{z(z+1)}2Z''(2+z)+zZ'(2+z)+zZ(2+z)
={}&\frac{c_2}{z^2}+c_2\\
&+\frac{z(z+1)}2Z_2''(z)+zZ_2'(z)+zZ_2(z).
\end{aligned}
\]
Substitute these identities into \eqref{eq:prime-weight-dirichlet-decomposition},
using $Z(0)=c_0$, $(\log2)^{-z}=1-z\log(\log2)+O(z^2)$ and
$\mathcal H(z)=-\mathscr Rz+O(z^2)$.
The constant and linear terms are $d_0$ and $d_1$ as stated.
Since $c_2>0$, none of the three poles is removable.
\end{proof}

\begin{corollary}
\label{cor:dirichlet-growth}
Let $-1<\sigma_-<\sigma_+$ and let $z$ satisfy $\sigma_-\le\sigma\le\sigma_+$
and $|t|\ge1$. Then
\[
|D(z)|\ll_{\sigma_-,\sigma_+}(1+|t|)^6.
\]
\end{corollary}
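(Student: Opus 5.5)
We bound each term of the decomposition \eqref{eq:prime-weight-dirichlet-decomposition} separately. That identity was derived for $\sigma>2$, and by the continuation in \cref{prop:prime-weight-dirichlet-continuation} it holds throughout $\sigma>-1$. So we fix $-1<\sigma_-<\sigma_+$ and take $z=\sigma+it$ with $\sigma_-\le\sigma\le\sigma_+$ and $|t|\ge1$. The claim is that every term is $O((1+|t|)^6)$ uniformly on this region.

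The elementary terms are handled directly. We have $|(\log2)^{-z}|=(\log 2)^{-\sigma}$, which is bounded on the strip. For $\mathcal H$, the bound $|\mathcal H(z)|\ll_{\sigma_-,\sigma_+}(1+|z|)^3$ holds because $\sigma_->-1$. Since $|z|\le|\sigma|+|t|\ll 1+|t|$ on the strip, this is $O((1+|t|)^3)$.

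The lattice zeta terms are $Z(z)$, $zZ'(z+1)$, $\tfrac{z(z+1)}2Z''(z+2)$, $zZ'(z+2)$ and $zZ(z+2)$. Each has the form $P(z)Z^{(j)}(z+k)$ with $j,k\in\{0,1,2\}$ and $P$ a polynomial of degree at most two. The shifted point $z+k$ lies in the strip $\sigma_-\le\Re w\le\sigma_++2$, which is still contained in $\sigma>-1$, and it has the same imaginary part $t$, so $|t|\ge1$. Therefore \cref{lem:lattice-zeta-growth}, applied with $\sigma_+$ replaced by $\sigma_++2$, gives $|Z^{(j)}(z+k)|\ll(1+|t|)^4$. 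Using $|P(z)|\ll(1+|t|)^2$, every lattice term is $O((1+|t|)^6)$. Summing all the bounds gives the corollary.

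There is no real obstacle here. The only point to check is that the shifted arguments $z+1$ and $z+2$ stay inside the range where \cref{lem:lattice-zeta-growth} applies, with $|\Im|\ge1$ preserved. This holds because the shifts are real, and the lemma allows an arbitrary upper bound $\sigma_+$ for the strip.
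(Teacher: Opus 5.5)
Your proposal is correct and follows the paper's proof. Both bound each term of \eqref{eq:prime-weight-dirichlet-decomposition}, using Lemma~\ref{lem:lattice-zeta-growth} for the $Z$-terms, the $O((1+|z|)^2)$ polynomial coefficients, $\mathcal H(z)=O((1+|z|)^3)$, and $1+|z|\ll1+|t|$ on the strip; you also spell out that $z+1$ and $z+2$ stay in the widened strip $\sigma_-\le\Re w\le\sigma_++2$ with unchanged imaginary part, which the paper leaves implicit.
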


\begin{proof}
Apply \cref{lem:lattice-zeta-growth} to each term
of~\eqref{eq:prime-weight-dirichlet-decomposition}.
The derivatives of $Z$ that appear
in~\eqref{eq:prime-weight-dirichlet-decomposition} are $O((1+|t|)^4)$,
and their polynomial coefficients are $O((1+|z|)^2)$. Since $1+|z|\ll1+|t|$ on
the strip, this and $\mathcal H(z)=O((1+|z|)^3)$ give the bound for $D$.
\end{proof}

\section{The Mellin expansion}
\label{sec:mellin}

We expand the generating series of $S$ as $\tau\to0$ in a sector about the
positive real axis. For $\tau\in\mathbb C$ with $\Re \tau>0$, set
\[
\mathcal F(\tau)=\sum_{n\in S}n^{-\tau},
\qquad
L(\tau)=\sum_{a,b\ge0}\sum_{m\ge1}\frac{e^{-m\tau\lambda_{a,b}}}{m}.
\]
Mellin inversion gives, for $\Re x>0$,
\begin{equation}
\label{eq:mellin-inversion}
e^{-x}=\frac1{2\pi i}\int_{3-i\infty}^{3+i\infty}\Gamma(z)x^{-z}\,\mathrm{d}z.
\end{equation}
Applied to each term of $L$ with $x=m\tau\lambda_{a,b}$, this produces a
Mellin integral with kernel $\Gamma(z)\zeta(z+1)D(z)$.
The expansion of $L$ is obtained by shifting the line of integration
from $\Re z=3$ to the left of the origin. The shift crosses three poles,
which contribute as follows.

\begin{center}
\renewcommand{\arraystretch}{1.15}
\begin{tabular}{@{}ccl@{}}
\toprule
Pole & Order & Contribution to the expansion of $L(\tau)$\\
\midrule
$z=2$ & $1$ & the term in $\tau^{-2}$\\
$z=1$ & $2$ & the terms in $\tau^{-1}\log(1/\tau)$ and $\tau^{-1}$\\
$z=0$ & $4$ & a cubic polynomial in $\log(1/\tau)$\\
\bottomrule
\end{tabular}
\end{center}

The saddle-point shift in Section~\ref{sec:saddle} then modifies the
coefficient of $(\log W)^2$, the exponent of $W$ and the multiplicative
constant.

As $z\to0$, the gamma function has the Laurent expansion
\[
\Gamma(z)=z^{-1}-\gamma
+\left(\frac{\gamma^2}{2}+\frac{\pi^2}{12}\right)z
-\left(\frac{\gamma^3}{6}+\frac{\gamma\pi^2}{12}+\frac{\zeta(3)}3\right)z^2
+O(z^3).
\]
Multiplying by the expansion of $\zeta(1+z)$ from
Section~\ref{sec:intro} gives
\begin{equation}
\label{eq:gamma-zeta-expansion}
\Gamma(z)\zeta(z+1)=z^{-2}+\kappa_0+\kappa_1z+O(z^2),
\end{equation}
where
\[
\kappa_0=\frac{\pi^2}{12}-\gamma_1-\frac{\gamma^2}{2},
\qquad
\kappa_1=\frac{\gamma_2}{2}+\gamma\gamma_1+\frac{\gamma^3-\zeta(3)}{3}.
\]
Define the cubic polynomial $Q$ and the function $\Phi$ by
\begin{equation}
\label{eq:q-phi}
\begin{aligned}
Q(X)&=\frac{c_2}{6}X^3-\frac{c_1}{2}X^2+(d_0+c_2\kappa_0)X
+d_1-c_1\kappa_0+c_2\kappa_1,\\
\Phi(\tau)&=\frac{A_1}{\tau^2}+\frac{A_3-A_2\log(1/\tau)}{\tau}
+Q(\log(1/\tau)).
\end{aligned}
\end{equation}

\begin{proposition}
\label{prop:mellin-expansion}
Let $0<\varepsilon<\pi/2$ and $0<\delta<1$. As $\tau\to0$ in
$|\arg \tau|\le\pi/2-\varepsilon$, we have uniformly
\[
\mathcal F(\tau)=e^{\Phi(\tau)}\bigl(1+O_{\varepsilon,\delta}(|\tau|^\delta)\bigr),
\qquad
L(\tau)=\Phi(\tau)+O_{\varepsilon,\delta}(|\tau|^\delta).
\]
\end{proposition}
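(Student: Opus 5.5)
First I establish the relation between $\mathcal F$ and $L$. Unique factorisation, together with the distinctness of the primes $p_{r^as^b}$ (this uses multiplicative independence), gives the Euler product
\[
\mathcal F(\tau)=\prod_{a,b\ge0}\bigl(1-e^{-\tau\lambda_{a,b}}\bigr)^{-1},
\qquad \Re\tau>0.
\]
Taking the principal logarithm factor by factor and expanding gives $\log\mathcal F=L$. Here $|e^{-\tau\lambda_{a,b}}|<1$, and the double sum converges absolutely because $\sum_{a,b}e^{-\Re\tau\,\lambda_{a,b}}<\infty$. Hence $\mathcal F=e^{L}$. Once $L=\Phi+O(|\tau|^\delta)$ is known, the first statement follows from $e^{w}=1+O(|w|)$ for $|w|\le1$. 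So the whole task is the expansion of $L$.

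Next I write $L$ as a Mellin integral. For $\Re\tau>0$ I apply \eqref{eq:mellin-inversion} with $x=m\tau\lambda_{a,b}$. Then $x^{-z}=m^{-z}\lambda_{a,b}^{-z}\tau^{-z}$, where $\tau^{-z}$ uses the principal branch, because $\arg x=\arg\tau$. On $\Re z=3$ the triple sum $\sum m^{-1-\sigma}\lambda_{a,b}^{-\sigma}$ converges, since $D$ converges absolutely for $\sigma>2$. Also $|\Gamma(z)\tau^{-z}|\le|\Gamma(z)||\tau|^{-3}e^{|t|(\pi/2-\varepsilon)}$, and this is integrable because $\Gamma$ decays like $e^{-\pi|t|/2}$. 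Fubini therefore gives
\[
L(\tau)=\frac1{2\pi i}\int_{3-i\infty}^{3+i\infty}\Gamma(z)\zeta(z+1)D(z)\tau^{-z}\,\mathrm dz.
\]

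I then move the contour to $\Re z=-\delta$, with $0<\delta<1$. The integrand is meromorphic in $\sigma>-1$ by \cref{prop:prime-weight-dirichlet-continuation}. Its only poles in $-\delta\le\sigma\le3$ are at $2$, $1$ and $0$; note that $\zeta(z+1)$ has its pole at $z=0$, and $\Gamma$ has no other poles there. The horizontal segments at height $\pm T$ vanish as $T\to\infty$. This follows from Stirling's bound $|\Gamma(z)|\ll e^{-\pi|t|/2}|t|^{\sigma-1/2}$, the polynomial bound on $\zeta(z+1)$ from the Euler--Maclaurin formula in the proof of \cref{lem:lattice-zeta-growth}, \cref{cor:dirichlet-growth}, and $|\tau^{-z}|=|\tau|^{-\sigma}e^{t\arg\tau}\le|\tau|^{-\sigma}e^{|t|(\pi/2-\varepsilon)}$. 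Together these leave a net factor $e^{-\varepsilon|t|}$ times a polynomial. The same bounds control the remaining integral on $\Re z=-\delta$ by $|\tau|^{\delta}\int(1+|t|)^{N}e^{-\varepsilon|t|}\,\mathrm dt\ll_{\varepsilon,\delta}|\tau|^\delta$, uniformly in the sector. For $|t|<1$ on this line the integrand is bounded, since the line avoids the poles.

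The main work, which I expect to be the obstacle in the sense of bookkeeping, is the residue computation. It must reproduce $\Phi$ exactly, including the constant $d_1-c_1\kappa_0+c_2\kappa_1$.
\begin{itemize}
\item At $z=2$ the residue is $\Gamma(2)\zeta(3)c_2\tau^{-2}=A_1\tau^{-2}$.
\item At $z=1$ the pole is double. I expand $\Gamma(z)\zeta(z+1)=\zeta(2)+(\zeta'(2)+\zeta(2)\Gamma'(1))(z-1)+\cdots$ and $\tau^{-z}=\tau^{-1}(1-(z-1)\log\tau+\cdots)$, and combine with $D=-c_2(z-1)^{-2}+(c_1-c_2)(z-1)^{-1}$. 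The residue is
\[
\tau^{-1}\bigl[\zeta(2)(c_1-c_2)-c_2(\zeta'(2)-\gamma\zeta(2))-c_2\zeta(2)\log(1/\tau)\bigr],
\]
which I check equals $(A_3-A_2\log(1/\tau))/\tau$ with $A_2=\zeta(2)c_2$.
\item At $z=0$ the pole has order four. I multiply \eqref{eq:gamma-zeta-expansion} by the Laurent expansion of $D$ at $0$ and by $e^{zX}$, where $X=\log(1/\tau)$, and read off the coefficient of $z^{-1}$. This gives $\tfrac{c_2}{6}X^3-\tfrac{c_1}{2}X^2+(d_0+c_2\kappa_0)X+d_1-c_1\kappa_0+c_2\kappa_1$, which is $Q(X)$.
\end{itemize}
For this computation I only need $D$ up to $O(z^2)$ and $\Gamma\zeta$ up to $O(z^2)$, which is exactly what the earlier results supply. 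Summing the three residues gives $\Phi(\tau)$, and this completes the proof.
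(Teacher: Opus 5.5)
Your proposal is correct and follows the paper's proof essentially step for step. The shared steps are: the Euler product giving $\mathcal F=e^{L}$, termwise Mellin inversion justified by absolute convergence on $\Re z=3$, a contour shift to $\Re z=-\delta$ controlled by Stirling's formula, the polynomial bound for $\zeta(z+1)$ and \cref{cor:dirichlet-growth}, and the same three residue computations at $2$, $1$ and $0$. The only differences are cosmetic: the paper proves absolute convergence of $\mathcal F$ by monotone convergence over finite subsets of $\mathcal P$, which you take as standard, and you explicitly treat the range $|t|<1$ on the shifted line, which the paper leaves implicit.
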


\begin{proof}
Since $p_n>n$ and $r,s\ge2$, we have
$\lambda_{a,b}\ge(\log2)(1+a+b)/2$ and $\lambda_{a,b}\ge\log2$.
Thus $\sum_{a,b\ge0}e^{-\sigma\lambda_{a,b}}$ converges for every $\sigma>0$.
For $\Re \tau\ge\sigma$, we also have
\[
\sum_{a,b\ge0}\sum_{m\ge1}\frac{|e^{-m\tau\lambda_{a,b}}|}{m}
\le\frac1{1-2^{-\sigma}}
\sum_{a,b\ge0}e^{-\sigma\lambda_{a,b}}.
\]
Thus $L$ converges absolutely and locally uniformly in $\Re\tau>0$.
Multiplicative independence makes the primes in $\mathcal P$ distinct,
so unique factorisation gives the Euler product identity for each finite
subset of $\mathcal P$.

For real $\sigma>0$, let the finite subsets increase to $\mathcal P$.
Monotone convergence then gives
$\sum_{n\in S}n^{-\sigma}=e^{L(\sigma)}<\infty$.
Thus $\mathcal F$ converges absolutely and locally uniformly in $\Re \tau>0$,
and
\[
\mathcal F(\tau)=\prod_{a,b\ge0}(1-e^{-\tau\lambda_{a,b}})^{-1}=e^{L(\tau)}.
\]
Thus $L$ is a holomorphic logarithm of $\mathcal F$, real on the positive
real axis.

Since $\Re\tau>0$, we may apply~\eqref{eq:mellin-inversion} to each
term of $L$. Recall that $z=\sigma+it$.
As $|t|\to\infty$, Stirling's formula gives,
uniformly for $\sigma$ in a bounded real interval,
\[
|\Gamma(\sigma+it)|
\sim\sqrt{2\pi}\,|t|^{\sigma-1/2}e^{-\pi|t|/2}.
\]
Hence
\[
\sum_{a,b\ge0}\sum_{m\ge1}\frac1m\int_{-\infty}^{\infty}
\bigl|\Gamma(3+it)(m\tau\lambda_{a,b})^{-3-it}\bigr|\,\mathrm{d}t
\le|\tau|^{-3}\zeta(4)D(3)
\int_{-\infty}^{\infty}|\Gamma(3+it)|e^{|t\arg\tau|}\,\mathrm{d}t<\infty.
\]
We may therefore sum under the integral sign to obtain
\[
L(\tau)=\frac1{2\pi i}\int_{3-i\infty}^{3+i\infty}
\Gamma(z)\zeta(z+1)D(z)\tau^{-z}\,\mathrm{d}z.
\]

To shift the line of integration to $\Re z=-\delta$, we first estimate
the integrand. The poles of the integrand between the two lines are at
$2$, $1$ and $0$.
The proof of \cref{lem:lattice-zeta-growth} shows that $\zeta(z+1)$
has polynomial growth on this strip. By
\cref{cor:dirichlet-growth} and Stirling's
estimate,
\[
|\Gamma(\sigma+it)\zeta(\sigma+1+it)D(\sigma+it)|
\ll_\delta(1+|t|)^M e^{-\pi|t|/2}
\]
for $-\delta\le\sigma\le3$ and $|t|\ge1$, with some $M>0$.
After multiplication by $|\tau^{-z}|=|\tau|^{-\sigma}e^{t\arg\tau}$, the
exponential factor in this bound is $e^{-\pi|t|/2+t\arg\tau}$, which is
at most $e^{-\varepsilon|t|}$ when $|\arg\tau|\le\pi/2-\varepsilon$.
For fixed $\tau$ in this sector, the integrals over the horizontal
segments $-\delta\le\sigma\le3$, $t=\pm Y$, therefore tend to zero as
$Y\to\infty$.
The integral on $\Re z=-\delta$ is
\[
O_{\varepsilon,\delta}\left(
|\tau|^\delta\int_{-\infty}^{\infty}(1+|t|)^M e^{-\varepsilon|t|}\,\mathrm{d}t
\right)=O_{\varepsilon,\delta}(|\tau|^\delta).
\]

The residue theorem, applied to rectangles between
the two vertical lines, now gives $L(\tau)$ as the sum of the three residues,
up to $O_{\varepsilon,\delta}(|\tau|^\delta)$.
At $z=2$, the residue is $c_2\zeta(3)\tau^{-2}=A_1/\tau^2$.
At $z=1$, we use $\Gamma(1)\zeta(2)=\zeta(2)$ and
\[
\left.\frac{\mathrm{d}}{\mathrm{d}z}
\bigl(\Gamma(z)\zeta(z+1)\bigr)\right|_{z=1}
=\zeta'(2)-\gamma\zeta(2).
\]
The expansion of $D$ in
\cref{prop:prime-weight-dirichlet-continuation} gives the residue
\[
\frac{(c_1-c_2)\zeta(2)-c_2(\zeta'(2)-\gamma\zeta(2))
+c_2\zeta(2)\log \tau}{\tau}
=\frac{A_3-A_2\log(1/\tau)}\tau.
\]
At zero, multiplying~\eqref{eq:gamma-zeta-expansion} by the expansion of
$D$ in \cref{prop:prime-weight-dirichlet-continuation} gives
\[
\Gamma(z)\zeta(z+1)D(z)
=\frac{c_2}{z^4}-\frac{c_1}{z^3}
+\frac{d_0+c_2\kappa_0}{z^2}
+\frac{Q(0)}z+O(1).
\]
Since $\tau^{-z}=e^{z\log(1/\tau)}$, the residue is $Q(\log(1/\tau))$.
The three residues sum to $\Phi(\tau)$, which proves the expansion of $L$.
The formula for $\mathcal F$ follows from $\mathcal F=e^L$.
\end{proof}

\section{Tauberian estimates and the saddle point}
\label{sec:saddle}

We use the following form of Ingham's theorem.

\begin{samepage}
\begin{theorem}[Ingham, {\cite[Thm.~4.1]{BringmannJenningsShafferMahlburg2023}}]
\label{thm:ingham}
Let $N:[0,\infty)\to\mathbb R$ be nondecreasing and right-continuous
with $N(0)=0$, and suppose that its Laplace--Stieltjes transform
\[
f(\tau)=\int_{[0,\infty)}e^{-\tau t}\,\mathrm{d}N(t)
\]
converges for $\Re\tau>0$.
Let $h>0$, let $U\subset\{\tau:\Re\tau>0\}$ be a connected open set
containing $(0,h]$, and put
$\rho(\sigma)=\operatorname{dist}(\sigma,\mathbb C\setminus U)$ for
$0<\sigma\le h$.
Let $\varphi$ be holomorphic on $U$, with $\varphi(\sigma)>0$ and
$\varphi''(\sigma)>0$ for $0<\sigma\le h$, and assume that, as
$\sigma\to0^+$,
\begin{enumerate}
\item $-\sigma\varphi'(\sigma)\to\infty$;
\item $-\dfrac{\rho(\sigma)\varphi'(\sigma)}
   {\sigma\sqrt{\varphi''(\sigma)}}\to\infty$;
\item $|\varphi''(\sigma+w)|\ll\varphi''(\sigma)$ uniformly for
   $|w|<\rho(\sigma)$.
\end{enumerate}
Suppose that $f(\tau)\sim e^{\varphi(\tau)}$ as $\tau\to0$ in $U$, and
that, in each sector $|\Im\tau|\le\Delta\Re\tau$ with $\Delta>0$, we
have $|f(\tau)|\ll_\Delta e^{\varphi(|\tau|)}$ as $\tau\to0$.
For $t$ large enough, let $\tau_t\in(0,h]$ be the unique solution of
$-\varphi'(\tau_t)=t$. Then, as $t\to\infty$,
\[
N(t)\sim
\frac{\exp\bigl(\varphi(\tau_t)+t\tau_t\bigr)}
{\tau_t\sqrt{2\pi\varphi''(\tau_t)}}.
\]
\end{theorem}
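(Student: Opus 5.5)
This is the form of Ingham's Tauberian theorem proved in \cite[Thm.~4.1]{BringmannJenningsShafferMahlburg2023}, and in the paper it is simply quoted. If I had to prove it, I would use the probabilistic saddle-point argument: a tilt of $\mathrm dN$ followed by a local limit theorem on windows of width comparable to $1/\sigma$.

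\emph{Setup.} Fix $t$ large and put $\sigma=\tau_t$. Then $\sigma\to0^+$ as $t\to\infty$, and $\sigma$ is well defined because $\varphi''>0$ and $-\varphi'\to\infty$. Define the probability measure
\[
\mathrm d\mu_\sigma(u)=e^{-\sigma u}\,\mathrm dN(u)/f(\sigma).
\]
Its characteristic function is
\[
\psi_\sigma(y)=\int e^{iyu}\,\mathrm d\mu_\sigma(u)=f(\sigma-iy)/f(\sigma).
\]
The target formula is the statement
\[
N(t)=f(\sigma)e^{\sigma t}\int_{u\le t}e^{-\sigma(t-u)}\,\mathrm d\mu_\sigma(u)
\sim\frac{f(\sigma)e^{\sigma t}}{\sigma\sqrt{2\pi\varphi''(\sigma)}},
\]
together with $f(\sigma)\sim e^{\varphi(\sigma)}$. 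So I must show that $\mu_\sigma$ looks like a Gaussian with mean $-\varphi'(\sigma)=t$ and variance $B^2=\varphi''(\sigma)$, at least on the scale $1/\sigma$. The kernel $e^{-\sigma(t-u)}\mathbf 1_{u\le t}$ has total mass $1/\sigma$, and I expect $1/\sigma=o(B)$ to follow from (i)--(iii). The density of the Gaussian is then essentially constant, equal to $1/(B\sqrt{2\pi})$, on the region that matters.

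\emph{Central range.} For $|y|\le\rho(\sigma)$, the point $\sigma-iy$ lies in $U$, so $f(\sigma-iy)/f(\sigma)=e^{\varphi(\sigma-iy)-\varphi(\sigma)}(1+o(1))$. Taylor's formula with the uniform bound (iii) on $\varphi''$ gives
\[
\varphi(\sigma-iy)-\varphi(\sigma)=-iy\varphi'(\sigma)-\tfrac12y^2\varphi''(\sigma)\,(1+\theta),
\]
with $\theta$ controlled. Hence $\psi_\sigma(v/B)e^{-ivt/B}\to e^{-v^2/2}$ for $|v|\le B\rho$. I would use (ii), rewritten as $\rho B\cdot(t/(\sigma B^2))\to\infty$, to make $B\rho$ large enough to contain the Gaussian bulk. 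Where $\theta$ is not small, (iii) still gives $\Re(\varphi(\sigma-iy)-\varphi(\sigma))\le-cy^2B^2$, and this decay integrates to $o(1/B)$.

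\emph{Outer range, and the main obstacle.} The window has width about $1/\sigma$, so the frequencies $|y|$ up to about $K\sigma$ must be controlled, and these may exceed $\rho$. On $\rho\le|y|\le K\sigma$ the point $\sigma-iy$ lies in the sector $|\Im\tau|\le K\Re\tau$. There I would use the hypothesis $|f|\ll_K e^{\varphi(|\tau|)}$. Since $|\sigma-iy|>\sigma$ and $\varphi$ is decreasing, this gives
\[
|\psi_\sigma(y)|\ll e^{\varphi(\sqrt{\sigma^2+y^2})-\varphi(\sigma)}.
\]
The exponent is at most about $-y^2t/(2\sigma)$ by convexity and $-\varphi'\ge t$ near $\sigma$, which beats the trivial $O(1)$ factors once $y\ge\rho$, again by (ii). Making this decay strong enough, uniformly in $K$, is where I expect the real work to lie.

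\emph{Passage to $N(t)$.} I would replace the discontinuous kernel $e^{-\sigma(t-u)}\mathbf 1_{u\le t}$ by upper and lower minorant/majorant kernels whose Fourier transforms are supported in $|y|\le K\sigma$, for example Beurling--Selberg or Fejér-type kernels. Each kernel has the same mass $1/\sigma$ up to a factor $1+O(1/K)$. Parseval, combined with the two ranges above, evaluates each smoothed integral as $(1+O(1/K)+o(1))/(\sigma B\sqrt{2\pi})$. Monotonicity of $N$ sandwiches $N(t)$ between the two smoothed integrals. Letting $t\to\infty$ and then $K\to\infty$ gives the claimed asymptotic.
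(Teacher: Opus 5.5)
The paper gives no proof of this statement. It is quoted from \cite{BringmannJenningsShafferMahlburg2023}, and the paper only checks its hypotheses in \cref{lem:tauberian-admissibility}. Your skeleton is a sound way to organise a proof: tilt to $\mu_\sigma$, sandwich $e^{-\sigma v}\mathbf 1_{v\ge0}$ between kernels whose Fourier transforms are supported in $|y|\le K\sigma$, then apply Parseval. The band limitation is the right idea. On $|\Im\tau|>K\Re\tau$ you only know $|f(\tau)|\le f(\Re\tau)$, and with a Perron kernel such as $\tau^{-2}$ (for $\int_0^tN$) that trivial bound costs a factor $\asymp\sigma B/K\to\infty$, where $B=\sqrt{\varphi''(\sigma)}$.

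\textbf{The outer range.} There is a genuine gap at the step you flag, and the argument you give there is wrong. Since $\varphi'$ is increasing, $-\varphi'(u)\le-\varphi'(\sigma)=t$ for $u\in[\sigma,|\tau|]$, not $\ge t$. Convexity therefore gives only the lower bound $\varphi(|\tau|)-\varphi(\sigma)\ge-t(|\tau|-\sigma)$, which is the reverse of what you need. An upper bound requires a lower bound for $-\varphi'$ on $[\sigma,|\tau|]$, with $|\tau|$ as large as $\sqrt{1+K^2}\,\sigma$. That is far outside the disc of radius $\rho(\sigma)$, the only place where (iii) says anything.

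What you can actually get is the following. For $|y|\ge y_0$ with $y_0\le\rho$, monotonicity gives $\varphi(|\tau|)\le\varphi(\sigma_1)$ with $\sigma_1=\sqrt{\sigma^2+y_0^2}$. Then (iii) on $[\sigma,\sigma_1]$ gives $\varphi(\sigma)-\varphi(\sigma_1)\ge t\delta-\tfrac C2B^2\delta^2=:S$, where $\delta=\sigma_1-\sigma\asymp y_0^2/\sigma$. This saving is uniform in $y$ and does not improve as $|y|$ grows. It must also beat more than an $O(1)$ factor. The outer range has length $\asymp K\sigma$, the transformed kernel is $\ll1/\sigma$, and the main term is $\asymp1/(\sigma B)$. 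So you need $S-\log(\sigma B)\to\infty$, and you do not show that (ii) gives this.

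\textbf{The central range and the scales.} Condition (iii) bounds $|\varphi''|$ but gives no lower bound for $\Re\varphi''(\sigma-iv)$. So it does not yield $\Re\bigl(\varphi(\sigma-iy)-\varphi(\sigma)\bigr)\le-cy^2B^2$ on all of $|y|\le\rho$. Cauchy's estimate for $\varphi'''$ gives this only for $|y|\le\epsilon\rho$ with $\epsilon$ small. The band $\epsilon\rho\le|y|\le\rho$ must therefore go to the outer-range argument, with $y_0=\epsilon\rho$ above.

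You also assume, rather than prove, the scale relations $\sigma B\to\infty$ and $\rho B\to\infty$. The Gaussian limit rests on the second, and replacing the transformed kernel at $y$ by its value at $0$ rests on the first. Condition (ii) is $\rho B\cdot t/(\sigma B^2)\to\infty$. None of (i)--(iii) bounds $t/(\sigma B^2)=-\varphi'(\sigma)/(\sigma\varphi''(\sigma))$ from above, so (ii) alone does not give $\rho B\to\infty$.

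The missing ingredient is an argument that the hypotheses force $\rho B\to\infty$ and $S-\log(\sigma B)\to\infty$ for an admissible $y_0$. Without it, your sketch, once the outer range is repaired as above, proves the theorem only under these extra conditions. In the paper's application they are immediate. With $\Phi\sim A_1\tau^{-2}$ and $\rho(\sigma)=\sigma\sin\theta_0$, one has $\sigma B\asymp\rho B\asymp\sigma^{-1}$, and $S\asymp_\epsilon\sigma^{-2}$ for $y_0=\epsilon\rho$.
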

\end{samepage}

\begin{lemma}
\label{lem:tauberian-admissibility}
There exists $\sigma_*>0$ such that, for all large $t$, the equation
$-\Phi'(\tau)=t$ has a unique solution $\tau_t\in(0,\sigma_*)$.
As $t\to\infty$,
\[
C(e^t)\sim
\frac{\exp\bigl(\Phi(\tau_t)+t\tau_t\bigr)}
{\tau_t\sqrt{2\pi\Phi''(\tau_t)}}.
\]
\end{lemma}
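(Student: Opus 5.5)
We will apply \cref{thm:ingham} to the counting function $N(t)=C(e^t)$, more precisely $N(t)=\#\{n\in S:\log n\le t\}$ for $t\ge0$. This function is nondecreasing and right-continuous, and $N(0)=1$ rather than $0$. To meet the hypothesis $N(0)=0$ we will use $N(t)-1$ instead. The transform then becomes $\mathcal F(\tau)-1$, and the constant $-1$ is negligible against $e^{\Phi}$, so the asymptotic $N(t)\sim\cdots$ is unaffected. The Laplace--Stieltjes transform is $\mathcal F(\tau)=\sum_{n\in S}e^{-\tau\log n}$, which converges for $\Re\tau>0$ by \cref{prop:mellin-expansion}. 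We take $\varphi=\Phi$ and choose $U$ to be the truncated sector $\{0<|\tau|<h,\ |\arg\tau|<\pi/4\}$, say. Then $\rho(\sigma)\asymp\sigma$ for small $\sigma$; precisely, $\rho(\sigma)=\sigma\sin(\pi/4)$ once $\sigma$ is small relative to $h$. The function $\Phi$ is holomorphic there, using the principal branch of $\log(1/\tau)$.

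Next we will check the analytic hypotheses on $\Phi$ near $0$. From \eqref{eq:q-phi} we have $\Phi(\sigma)\sim A_1\sigma^{-2}$, $\Phi'(\sigma)=-2A_1\sigma^{-3}(1+O(\sigma\log(1/\sigma)))$ and $\Phi''(\sigma)=6A_1\sigma^{-4}(1+O(\sigma\log(1/\sigma)))$. Each lower-order term is smaller than the main one by a factor of $\sigma\log(1/\sigma)$ or better. Since $A_1>0$, this gives $\Phi>0$ and $\Phi''>0$ on some $(0,h]$, and shows that $-\Phi'$ is strictly decreasing to $+\infty$ there. This yields $\sigma_*$ and the uniqueness of $\tau_t$. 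Condition (i) holds because $-\sigma\Phi'(\sigma)\asymp\sigma^{-2}$. For condition (ii), the quotient $-\rho\Phi'/(\sigma\sqrt{\Phi''})$ is of order $\sigma\cdot\sigma^{-3}/(\sigma\cdot\sigma^{-2})=\sigma^{-1}\to\infty$. For condition (iii), take $|w|<\rho(\sigma)$. Then $|\sigma+w|\asymp\sigma$, since $\rho(\sigma)\le\sigma\sin(\pi/4)<\sigma$. The same expansion, valid uniformly on the sector, gives $|\Phi''(\sigma+w)|\ll|\sigma+w|^{-4}\ll\sigma^{-4}\asymp\Phi''(\sigma)$.

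Now the transform hypotheses. \cref{prop:mellin-expansion}, applied with $\varepsilon=\pi/4$, gives $\mathcal F(\tau)\sim e^{\Phi(\tau)}$ as $\tau\to0$ in $U$. The global bound in every sector is where the real work lies. Fix $\Delta>0$ and take $|\Im\tau|\le\Delta\Re\tau$. This is a sector $|\arg\tau|\le\arctan\Delta<\pi/2$, so \cref{prop:mellin-expansion} applies with $\varepsilon=\pi/2-\arctan\Delta$. It gives $|\mathcal F(\tau)|\ll e^{\Re\Phi(\tau)}$. We then need to show $\Re\Phi(\tau)\le\Phi(|\tau|)+O(1)$. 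Write $\tau=|\tau|e^{i\theta}$. The leading term gives $\Re(A_1\tau^{-2})=A_1|\tau|^{-2}\cos2\theta$. When $|\theta|$ exceeds a small fixed $\theta_0$, this falls below $A_1|\tau|^{-2}$ by a fixed multiple of $|\tau|^{-2}$, and that deficit dominates the differences of all lower-order terms, which are $O(|\tau|^{-1}\log(1/|\tau|))$. When $|\theta|\le\theta_0$, we will instead use a Taylor expansion in $\theta$. The leading term contributes $-A_1|\tau|^{-2}(1-\cos2\theta)\asymp-|\tau|^{-2}\theta^2$. The next terms change by $O(|\tau|^{-1}\log(1/|\tau|)\,\theta^2)$; here the first-order changes are purely imaginary, because $\Phi$ is real on the real axis, and so they drop out of the real part. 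The cubic in $\log(1/\tau)$ changes by $O(\theta^2(\log)^{2})$ in real part. Hence $\Re\Phi(\tau)\le\Phi(|\tau|)$ for small $|\tau|$.

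The main obstacle we expect is exactly this uniform comparison of $\Re\Phi(\tau)$ with $\Phi(|\tau|)$. The error terms $|\tau|^{-1}\log(1/|\tau|)$ are only logarithmically smaller relative to $|\tau|^{-2}$, so the estimate must carefully exploit the $\theta^2$ structure near the real axis. With both transform hypotheses verified, \cref{thm:ingham} yields the stated asymptotic for $C(e^t)$.
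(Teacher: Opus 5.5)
Your route is the paper's: Ingham's theorem with $N(t)=C(e^t)-1$, $\varphi=\Phi$, a sector $U$ with $\rho(\sigma)\asymp\sigma$, conditions (i)--(iii) from the leading asymptotics of $\Phi'$ and $\Phi''$, and the global bound from $\Re\Phi(\tau)\le\Phi(|\tau|)$ via the evenness in $\vartheta$. The paper computes $\Re\Phi(\xi e^{i\vartheta})-\Phi(\xi)$ exactly and uses $\sin^2\vartheta\gg_\Delta\vartheta^2$ on all of $|\vartheta|\le\arctan\Delta$, so it needs no split at $\theta_0$; your split also works.

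There is, however, a genuine gap in the hypothesis $f(\tau)\sim e^{\varphi(\tau)}$ on $U$. Ingham's theorem is applied to $f=\mathcal F-1$, not to $\mathcal F$, and \cref{prop:mellin-expansion} only gives $\mathcal F\sim e^{\Phi}$. To pass to $\mathcal F-1$ you need $\Re\Phi(\tau)\to+\infty$ uniformly as $\tau\to0$ in $U$. This fails on your sector of half-angle $\pi/4$. Write $\tau=\xi e^{i\vartheta}$ and $\ell=\log(1/\xi)$. The leading term has real part $A_1\xi^{-2}\cos2\vartheta$, which degenerates at $\vartheta=\pm\pi/4$. The real part of $-A_2\tau^{-1}\log(1/\tau)$ is about $-A_2\ell\cos\vartheta/\xi$. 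Along $\vartheta=\pi/4-\xi$, for instance, $\Re\Phi(\tau)=\xi^{-1}\bigl(2A_1-A_2\ell/\sqrt2+O(1)\bigr)+O(\ell^3)\to-\infty$. There $\mathcal F(\tau)\to0$, so $f(\tau)\to-1$ while $e^{\Phi(\tau)}\to0$, and $f\sim e^{\Phi}$ is false. Your claim that the $-1$ is negligible against $e^{\Phi}$ is therefore not automatic. It is exactly what must be checked, and it fails on your $U$.

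The repair is what the paper does. Take $U=\{\tau\ne0:|\arg\tau|<\theta_0\}$ with $0<\theta_0<\pi/4$. Then $\Re\Phi(\tau)\ge A_1\cos(2\theta_0)\xi^{-2}-O\bigl((1+\ell)/\xi+(1+\ell)^3\bigr)\gg\xi^{-2}$ uniformly in $U$. Hence $e^{-\Phi}=O(|\tau|^\delta)$ and $\mathcal F-1=e^{\Phi}(1+O(|\tau|^\delta))$.

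A smaller point: if you truncate $U$ at $|\tau|<h$, it no longer contains $h$, whereas Ingham's theorem requires $(0,h]\subset U$. Truncate at a radius larger than $h$, or not at all.
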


\begin{proof}
Set $N(t)=C(e^t)-1$ for $t\ge0$.
Then $N$ is nondecreasing and right-continuous, with $N(0)=0$.
The measure $\mathrm{d}N$ consists of unit point masses at the points
$\log n$ with $n\in S$ and $n\ge2$. Its Laplace--Stieltjes transform is
\[
f(\tau)=\int_{[0,\infty)}e^{-\tau t}\,\mathrm{d}N(t)=\mathcal F(\tau)-1,
\]
which converges absolutely for $\Re\tau>0$ by the proof of
\cref{prop:mellin-expansion}.

Differentiation gives
\begin{equation}
\label{eq:tauberian-proof-derivatives}
\begin{aligned}
\Phi'(\tau)&=-\frac{2A_1}{\tau^3}
+\frac{A_2-A_3+A_2\log(1/\tau)}{\tau^2}-\frac{Q'(\log(1/\tau))}{\tau},\\
\Phi''(\tau)&=\frac{6A_1}{\tau^4}
+\frac{-2A_2\log(1/\tau)+2A_3-3A_2}{\tau^3}
+\frac{Q'(\log(1/\tau))+Q''(\log(1/\tau))}{\tau^2}.
\end{aligned}
\end{equation}
Since $A_1>0$, we have, as $\sigma\to0^+$,
\[
\Phi(\sigma)\sim\frac{A_1}{\sigma^2},\qquad
-\Phi'(\sigma)\sim\frac{2A_1}{\sigma^3},\qquad
\Phi''(\sigma)\sim\frac{6A_1}{\sigma^4}.
\]
Choose $\sigma_*>0$ so that these three functions are positive on
$(0,\sigma_*]$. On this interval, $-\Phi'$ is strictly decreasing, and
it tends to infinity as $\sigma\to0^+$. Thus $\tau_t$ exists and is unique for
$t>-\Phi'(\sigma_*)$, and $\tau_t\sim(2A_1/t)^{1/3}$.

First, we verify the hypothesis $f(\tau)\sim e^{\varphi(\tau)}$ of
\cref{thm:ingham}.
Fix $0<\theta_0<\pi/4$, so that $\cos(2\theta_0)>0$, and let $U$ be the
open sector ${\{\tau\ne0:|\arg\tau|<\theta_0\}}$.
The function $\Phi$ is holomorphic on $U$.
For $\sigma>0$, the distance from $\sigma$ to the complement of $U$ is
$\sigma\sin\theta_0$, so $\rho(\sigma)=\sigma\sin\theta_0$.
Write $\tau=\xi e^{i\vartheta}$ and $\ell=\log(1/\xi)$. As $\xi\to0^+$, we have
uniformly for $|\vartheta|<\theta_0$
\[
\Re\Phi(\tau)\ge\frac{A_1\cos(2\theta_0)}{\xi^2}
-O_{\theta_0}\!\left(\frac{1+\ell}{\xi}+(1+\ell)^3\right)
\gg_{\theta_0}\xi^{-2}.
\]
Hence $e^{-\Phi(\tau)}=O_{\theta_0}(|\tau|^{\delta})$ for every $\delta>0$,
uniformly in $U$. By \cref{prop:mellin-expansion}, for every
$0<\delta<1$, uniformly as $\tau\to0$ in $U$,
\[
f(\tau)=\mathcal F(\tau)-1
=e^{\Phi(\tau)}\bigl(1+O_{\theta_0,\delta}(|\tau|^\delta)\bigr).
\]

The theorem also requires a bound in terms of $|\tau|$ in every fixed sector
$|\Im \tau|\le\Delta\Re \tau$, with $\Delta>0$.
A calculation gives
\[
\Re\Phi(\xi e^{i\vartheta})-\Phi(\xi)
=-\frac{2A_1\sin^2\vartheta}{\xi^2}
+\frac{(A_3-A_2\ell)(\cos\vartheta-1)+A_2\vartheta\sin\vartheta}{\xi}
+\frac{c_1-c_2\ell}{2}\vartheta^2.
\]
For $|\vartheta|\le\arctan\Delta$, we have
$\sin^2\vartheta\gg_\Delta\vartheta^2$,
$|\cos\vartheta-1|\le\vartheta^2/2$ and
$|\vartheta\sin\vartheta|\le\vartheta^2$.
The difference above is therefore at most
\[
\left[-\frac{2A_1}{\xi^2}
+O_\Delta\!\left(\frac{1+\ell}{\xi}+1+\ell\right)\right]\sin^2\vartheta.
\]
Since $\xi(1+\ell)\to0$, the bracket is negative for $\xi$ small enough,
so the difference is nonpositive, uniformly for
$|\vartheta|\le\arctan\Delta$.
Another application of \cref{prop:mellin-expansion}, in this sector, gives,
as $\tau\to0$ with $|\Im\tau|\le\Delta\Re\tau$,
\[
|f(\tau)|\le|\mathcal F(\tau)|+1\ll_\Delta e^{\Phi(|\tau|)}.
\]

Conditions~(i) and~(ii) follow from the asymptotics of $\Phi'$ and
$\Phi''$ above. Indeed,
\[
-\sigma\Phi'(\sigma)\sim\frac{2A_1}{\sigma^2}\to\infty,
\qquad
-\frac{\sin\theta_0\,\Phi'(\sigma)}{\sqrt{\Phi''(\sigma)}}
\sim\sin\theta_0\sqrt{\frac{2A_1}{3}}\,\frac1\sigma\to\infty.
\]
If $|w|<\sigma\sin\theta_0$, then
$|\sigma+w|\asymp_{\theta_0}\sigma$ and
$|\log(\sigma+w)|\ll_{\theta_0}1+\log(1/\sigma)$.
Condition~(iii) then follows
from~\eqref{eq:tauberian-proof-derivatives}, which gives
\[
|\Phi''(\sigma+w)|\ll_{\theta_0}\sigma^{-4}
\ll_{\theta_0}\Phi''(\sigma).
\]
Therefore, we may apply \cref{thm:ingham} on $U$, with
$\varphi=\Phi$ and $h=\sigma_*$.
Since~${2\in\mathcal P}$, the set $S$ contains all powers of $2$, so
${N(t)\ge\lfloor t/\log2\rfloor\to\infty}$.
Therefore $C(e^t)=N(t)+1\sim N(t)$.
\end{proof}

\begin{samepage}
\begin{lemma}
\label{lem:explicit-saddle-expansion}
Let $\tau_t$ be as in \cref{lem:tauberian-admissibility}, and put
$W=(t/(2A_1))^{1/3}$. Then, as $t\to\infty$,
\begin{enumerate}
\item $\displaystyle\Phi(\tau_t)+t\tau_t
=3A_1W^2+(A_3-A_2\log W)W+Q(\log W)
-\frac{(A_3-A_2-A_2\log W)^2}{12A_1}
+O\!\left(\frac{(\log W)^3}{W}\right)$;
\item $\displaystyle\tau_t\sqrt{2\pi\Phi''(\tau_t)}
=\sqrt{12\pi A_1}\,W\left(1+O\!\left(\frac{\log W}{W}\right)\right)$.
\end{enumerate}
\end{lemma}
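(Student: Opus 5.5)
Both parts come from expanding around the explicit approximation $\tau=1/W$. The leading terms of $-\Phi'$ give $2A_1/\tau^3=t=2A_1W^3$, i.e. $\tau=1/W$. Put
\[
G(\tau)=\Phi(\tau)+t\tau,
\qquad
\tau_t=\frac{1+\eta}{W}.
\]
The plan is first to locate $\tau_t$ to second order, i.e. to determine $\eta$ up to a relative error $O(\log W/W)$. Then we Taylor-expand $G$ about its critical point $\tau_t$, so that the error in $\eta$ enters only quadratically.

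\emph{Step 1 (the saddle point).} From \cref{lem:tauberian-admissibility} we already know $\tau_t\sim(2A_1/t)^{1/3}=1/W$, so $\eta=o(1)$. Substitute $\tau=(1+\eta)/W$ into the formula~\eqref{eq:tauberian-proof-derivatives} for $\Phi'$, using $\log(1/\tau)=\log W-\log(1+\eta)$ and $Q'(\log(1/\tau))=O((\log W)^2)$. The equation $-\Phi'(\tau_t)=t$ becomes
\[
2A_1W^3\bigl[(1+\eta)^{-3}-1\bigr]
=W^2(1+\eta)^{-2}\bigl(A_2-A_3+A_2\log W+O(|\eta|)\bigr)+O\bigl(W(\log W)^2\bigr).
\]
A first pass gives $\eta=O(\log W/W)$. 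Feeding this back in gives
\[
\eta=\frac{A_3-A_2-A_2\log W}{6A_1W}+O\!\left(\frac{(\log W)^2}{W^2}\right).
\]
Hence $\delta:=\tau_t-1/W=\eta/W$ satisfies $\delta\ll\log W/W^2$.

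\emph{Step 2 (part (i)).} Since $G'(\tau_t)=0$, Taylor's formula with integral remainder on the real segment between $\tau_t$ and $1/W$ gives
\[
G(1/W)=G(\tau_t)+\tfrac12G''(\tau_t)\delta^2+O\Bigl(\sup|\Phi'''|\,|\delta|^3\Bigr).
\]
\begin{itemize}
\item The value $G(1/W)=A_1W^2+(A_3-A_2\log W)W+Q(\log W)+2A_1W^2$ is read off directly from~\eqref{eq:q-phi}.
\item By~\eqref{eq:tauberian-proof-derivatives}, $G''=\Phi''=6A_1\tau^{-4}\bigl(1+O(\tau\log(1/\tau))\bigr)$. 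With $\tau_t=W^{-1}(1+O(\log W/W))$ this gives $G''(\tau_t)=6A_1W^4\bigl(1+O(\log W/W)\bigr)$.
\item The main quadratic term is $\tfrac12G''(\tau_t)\delta^2=3A_1W^2\eta^2\bigl(1+O(\log W/W)\bigr)$. Inserting Step 1, this equals $(A_3-A_2-A_2\log W)^2/(12A_1)+O((\log W)^3/W)$. The error in $\eta$ costs only $W^2\cdot(\log W/W)\cdot(\log W)^2/W^2$.
\item Differentiating once more, $\Phi'''\ll\tau^{-5}\asymp W^5$ on the segment. The cubic remainder is therefore $O(W^5(\log W)^3/W^6)=O((\log W)^3/W)$.
\end{itemize}
Rearranging gives (i) with the stated error.

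\emph{Step 3 (part (ii)).} This is immediate from the same facts:
\[
\tau_t^2\Phi''(\tau_t)=6A_1W^2\bigl(1+O(\log W/W)\bigr),
\]
so $\tau_t\sqrt{2\pi\Phi''(\tau_t)}=\sqrt{12\pi A_1}\,W\bigl(1+O(\log W/W)\bigr)$.

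The main obstacle is the bookkeeping in Step 1. We must track which terms of $\Phi'$ contribute at order $W^2$ versus $W(\log W)^2$. We must also check that the logarithmic correction $\log(1+\eta)$ and the error in $\eta$ affect (i) only at order $(\log W)^3/W$. Using stationarity at $\tau_t$, rather than expanding $G$ about $1/W$ directly, is what keeps this manageable.
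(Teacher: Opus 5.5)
Your proposal is correct and follows essentially the same route as the paper. You locate $\tau_t$ to second order, with error $O((\log W)^2/W^3)$, and then Taylor-expand $\Phi(\tau)+t\tau$ using $\Phi''(\tau)=6A_1W^4(1+O(\log W/W))$ on the segment between $W^{-1}$ and $\tau_t$; the paper does the same.

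The differences are minor. The paper finds $\tau_t$ by two applications of the mean-value theorem to $\Phi'$ and expands to second order about $W^{-1}$. It then completes the square using the explicit value of $\Phi'(W^{-1})+t$. You instead bootstrap the equation $-\Phi'(\tau_t)=t$ directly and expand about the critical point $\tau_t$. This costs you only the extra, easy bound $\Phi'''\ll W^5$.
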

\end{samepage}

\begin{proof}
Take $t$ large enough that $\log W\ge1$.
Then $t=2A_1W^3$, $A_3-A_2-A_2\log W\ll\log W$ and
$Q'(\log W)\ll(\log W)^2$.
The proof of \cref{lem:tauberian-admissibility} gives $\tau_t\sim W^{-1}$.
Equation~\eqref{eq:tauberian-proof-derivatives} gives
$\Phi''(\tau)\asymp W^4$ on the interval between $W^{-1}$ and $\tau_t$. It also gives
\[
-\Phi'(W^{-1})=t+(A_3-A_2-A_2\log W)W^2+Q'(\log W)W.
\]
The mean-value theorem, applied to $\Phi'$ on this interval, therefore
gives $\tau_t-W^{-1}=O(W^{-2}\log W)$.
Consequently, \eqref{eq:tauberian-proof-derivatives} gives, uniformly on
this interval,
\[
\Phi''(\tau)=6A_1W^4\left(1+O\!\left(\frac{\log W}{W}\right)\right).
\]
Applying the mean-value theorem again, with this expansion of $\Phi''$,
gives
\[
\tau_t=W^{-1}+\frac{A_3-A_2-A_2\log W}{6A_1W^2}
+O\!\left(\frac{(\log W)^2}{W^3}\right).
\]

At $W^{-1}$, we have
\[
\begin{aligned}
\Phi(W^{-1})+tW^{-1}&=3A_1W^2+(A_3-A_2\log W)W+Q(\log W),\\
\Phi'(W^{-1})+t&=-(A_3-A_2-A_2\log W)W^2-Q'(\log W)W.
\end{aligned}
\]
Apply Taylor's theorem to $\Phi(\tau)+t\tau$ at $W^{-1}$.
The uniform expansion of $\Phi''$ above gives
\[
\begin{aligned}
\Phi(\tau_t)+t\tau_t
={}&\Phi(W^{-1})+tW^{-1}-(A_3-A_2-A_2\log W)W^2(\tau_t-W^{-1})\\
&+3A_1W^4(\tau_t-W^{-1})^2+O\!\left(\frac{(\log W)^3}{W}\right).
\end{aligned}
\]
Indeed, $Q'(\log W)W(\tau_t-W^{-1})$ and $W^3(\tau_t-W^{-1})^2\log W$ are
both $O((\log W)^3/W)$.
Completing the square, we obtain
\[
\begin{aligned}
&3A_1W^4(\tau_t-W^{-1})^2-(A_3-A_2-A_2\log W)W^2(\tau_t-W^{-1})\\
&\qquad=-\frac{(A_3-A_2-A_2\log W)^2}{12A_1}
+3A_1W^4\left(\tau_t-W^{-1}-\frac{A_3-A_2-A_2\log W}{6A_1W^2}\right)^2.
\end{aligned}
\]
The last term is $O((\log W)^4/W^2)$ by the estimate for $\tau_t$.
Since $(\log W)^4/W^2=o((\log W)^3/W)$, this proves~(i).
Part~(ii) follows from
$\tau_t=W^{-1}(1+O(\log W/W))$ and
$\Phi''(\tau_t)=6A_1W^4(1+O(\log W/W))$.
\end{proof}

\section{Proof of Theorem~\ref{thm:prime-index-strong-asymptotic} and
evaluation of the constant}
\label{sec:main-proof}

The constant in \cref{thm:prime-index-strong-asymptotic} is
\begin{equation}
\label{eq:constant}
\begin{aligned}
\mathfrak C=\frac1{(\log2)\sqrt{12\pi A_1}}
\exp\bigg(&Z'(0)+Z_1'(0)+Z_2(0)+Z_2'(0)+\tfrac12Z_2''(0)\\
&-\mathscr R-c_1\kappa_0+c_2\kappa_1-\frac{(A_3-A_2)^2}{12A_1}\bigg).
\end{aligned}
\end{equation}
\begin{proof}[Proof of \cref{thm:prime-index-strong-asymptotic}]
Take $t=\log x$, so that the $W$ of
\cref{lem:explicit-saddle-expansion} is the $W$ of
\cref{thm:prime-index-strong-asymptotic}.
\cref{lem:tauberian-admissibility,lem:explicit-saddle-expansion} give
\[
C(x)\sim
\frac{\exp\!\left(3A_1W^2+(A_3-A_2\log W)W+Q(\log W)
-(A_3-A_2-A_2\log W)^2/(12A_1)\right)}
{\sqrt{12\pi A_1}\,W}.
\]
Since \cref{lem:tauberian-admissibility} gives only an asymptotic
equivalence, the explicit error terms of
\cref{lem:explicit-saddle-expansion} do not improve the relative
error in \cref{thm:prime-index-strong-asymptotic} beyond $o(1)$.
In terms of $c_0$, $c_2$ and $\kappa_0$, the definition of $\nu$ reads
$\nu=c_0+c_2(1+\kappa_0)+A_2(A_3-A_2)/(6A_1)$.
Using $d_0=1+c_0+c_2$, we obtain
\[
\begin{aligned}
Q(\log W)-\frac{(A_3-A_2-A_2\log W)^2}{12A_1}-\log W
={}&\frac{c_2}{6}(\log W)^3
-\left(\frac{c_1}{2}+\frac{A_2^2}{12A_1}\right)(\log W)^2+\nu\log W\\
&+Q(0)-\frac{(A_3-A_2)^2}{12A_1}.
\end{aligned}
\]
Substituting the definitions of $c_1$ and $c_2$ gives
\eqref{eq:main-theorem-strong-asymptotic}, with
\[
\mathfrak C=\frac1{\sqrt{12\pi A_1}}
\exp\!\left(Q(0)-\frac{(A_3-A_2)^2}{12A_1}\right).
\]
The definitions of $Q$ in~\eqref{eq:q-phi} and of $d_1$ give the
formula~\eqref{eq:constant} for $\mathfrak C$.

The series $\mathscr R$ converges absolutely by the proof of
\cref{prop:prime-weight-dirichlet-continuation}. By
\cref{lem:lattice-zeta-continuation}, the functions $Z$, $Z_1$ and
$Z_2$ are holomorphic at zero.
Their values and derivatives at zero are real, since
$Z(\overline z)=\overline{Z(z)}$ and the same identity holds for $Z_1$
and $Z_2$. Thus $d_1$ and $Q(0)$ are finite and real.
Since $A_1>0$, we conclude that $0<\mathfrak C<\infty$.
\end{proof}

Let $\mathfrak C_T$ denote the right-hand side of~\eqref{eq:constant}
when the sum defining $\mathscr R$ is restricted to $u_{a,b}\le T$, with
all other quantities unchanged. Then, as $T\to\infty$,
\[
\frac{\mathfrak C_T}{\mathfrak C}
=1+O_{r,s}\!\left(\frac{(\log T)^3}{T}\right).
\]
This follows from the tail bound for $\mathscr R$ in the proof of
\cref{prop:prime-weight-dirichlet-continuation}, since
$e^v=1+O(v)$ as $v\to0$.

\section{Concluding remarks}
\label{sec:concluding}

Let $(r,s)=(2,3)$. Then $C(x)$ counts the prime codes of ordinals below
$\omega^{\omega^2}$. \cref{thm:prime-index-strong-asymptotic}
thus resolves the $k=2$ case of the problem posed in~\cite{VVW2014}. The
case $k=1$ was solved there. Here
$A_1=\zeta(3)/(\log2\log3)\approx1.5785$ and
$A_2=\zeta(2)/(\log2\log3)\approx2.1601$. Further, $A_3\approx2.2531$
and $\nu\approx1.7260$. In the exponent
of~\eqref{eq:main-theorem-strong-asymptotic}, the coefficient of
$(\log W)^3$ is about $0.2189$. The coefficient of $(\log W)^2$ is about
$-0.8346$. We have not evaluated $\mathfrak C$ numerically.
Formula~\eqref{eq:constant} needs the Laurent data of $Z$ at $0$, $1$
and $2$. It also needs the primes $p_{2^a3^b}$ in the series
$\mathscr R$. Section~\ref{sec:main-proof} bounds the error made by
truncating this series.

Let $k\ge3$. The prime codes of ordinals below $\omega^{\omega^k}$ are
the finite products of primes with indices
$\prod_{j=0}^{k-1}p_{2^j}^{\,a_j}$, where $a_0,\ldots,a_{k-1}\in\mathbb N$.
The empty product is included. Indeed, the finite ordinal $j$ has code
$2^j$. Hence $\omega^j$ has code $p_{2^j}$. Every exponent below
$\omega^k$ has the form $\omega^{k-1}a_{k-1}+\cdots+\omega a_1+a_0$.
The lattice zeta function is then $k$-dimensional. It has at most simple
poles, at $k,k-1,\ldots,1$. The prime weights add poles of higher order
to the Mellin kernel. We expect the method to extend to this case. The
longer expansions and estimates this requires remain to be checked.

\makeatletter
\let\orig@thebibliography\thebibliography
\renewcommand{\thebibliography}[1]{%
    \orig@thebibliography{#1}%
    \footnotesize
    \settowidth{\labelwidth}{[#1]}%
    \setlength{\labelsep}{0.5em}%
    \setlength{\itemindent}{-2em}%
    \setlength{\leftmargin}{\dimexpr\labelwidth+\labelsep-\itemindent\relax}%
    \setlength{\rightmargin}{3em}%
    \renewcommand{\makelabel}[1]{\hfill\textnormal{##1}}%
}
\makeatother

\hypersetup{linkcolor=backrefcolor}


\begin{thebibliography}{BBBF24}

\bibitem[AT13]{AriasDeReynaToulisse2013}
J.~Arias de Reyna and J.~Toulisse,
\emph{The $n$-th prime asymptotically},
J. Th\'eor. Nombres Bordeaux \textbf{25} (2013), 521--555.
\href{https://doi.org/10.5802/jtnb.847}{doi:10.5802/jtnb.847}.

\bibitem[BBBF24]{BridgesBrindleBringmannFranke2024}
W.~Bridges, B.~Brindle, K.~Bringmann and J.~Franke,
\emph{Asymptotic expansions for partitions generated by infinite products},
Math. Ann. \textbf{390} (2024), 2593--2632.
\href{https://doi.org/10.1007/s00208-024-02807-x}{doi:10.1007/s00208-024-02807-x}.

\bibitem[BJM23]{BringmannJenningsShafferMahlburg2023}
K.~Bringmann, C.~Jennings-Shaffer and K.~Mahlburg,
\emph{On a Tauberian theorem of Ingham and Euler--Maclaurin summation},
Ramanujan J. \textbf{61} (2023), 55--86.
\href{https://doi.org/10.1007/s11139-020-00377-5}{doi:10.1007/s11139-020-00377-5}.

\bibitem[DT20]{DebruyneTenenbaum2020}
G.~Debruyne and G.~Tenenbaum,
\emph{The saddle-point method for general partition functions},
Indag. Math. \textbf{31} (2020), 728--738.
\href{https://doi.org/10.1016/j.indag.2020.06.010}{doi:10.1016/j.indag.2020.06.010}.

\bibitem[G\"ob80]{Gobel1980}
F.~G\"obel,
\emph{On a 1-1-correspondence between rooted trees and natural numbers},
J. Combin. Theory Ser. B \textbf{29} (1980), 141--143.
\href{https://doi.org/10.1016/0095-8956(80)90049-0}{doi:10.1016/0095-8956(80)90049-0}.

\bibitem[GS12]{GranovskyStark2012}
B.~L.~Granovsky and D.~Stark,
\emph{A Meinardus theorem with multiple singularities},
Commun. Math. Phys. \textbf{314} (2012), 329--350.
\href{https://doi.org/10.1007/s00220-012-1526-8}{doi:10.1007/s00220-012-1526-8}.

\bibitem[Mat68]{Matula1968}
D.~W.~Matula,
\emph{A natural rooted tree enumeration by prime factorization},
SIAM Rev. \textbf{10} (1968), 273.

\bibitem[Ney17]{Neyt2017}
L.~Neyt,
\emph{Asymptotic distribution of integers with certain prime
factorizations},
Seminar slides, PhD Seminars, Ghent University, 14 December 2017.
\url{https://lennyneyt.com/files/slides/ADIPFII.pdf}.

\bibitem[Rui00]{Ruijsenaars2000}
S.~N.~M.~Ruijsenaars,
\emph{On Barnes' multiple zeta and gamma functions},
Adv. Math. \textbf{156} (2000), 107--132.
\href{https://doi.org/10.1006/aima.2000.1946}{doi:10.1006/aima.2000.1946}.

\bibitem[VVW14]{VVW2014}
H.~Vernaeve, J.~Vindas and A.~Weiermann,
\emph{Asymptotic distribution of integers with certain prime factorizations},
J. Number Theory \textbf{136} (2014), 87--99.
\href{https://doi.org/10.1016/j.jnt.2013.09.001}{doi:10.1016/j.jnt.2013.09.001}.

\end{thebibliography}
\end{document}